\documentclass{amsart}
\usepackage{amsfonts}
\usepackage{amsmath,amssymb}
\usepackage{amsthm}
\usepackage{amscd}
\usepackage{graphics}
\usepackage{graphicx}

\theoremstyle{remark}{
\newtheorem{Def}{{\rm Definition}}
\newtheorem{Ex}{{\rm Example}}
\newtheorem{Rem}{{\rm Remark}}
\newtheorem{Prob}{{\rm Problem}}

}
\theoremstyle{plain}
{

\newtheorem{Prop}{Proposition}
\newtheorem{Thm}{Theorem}

}

\begin{document}
\title[Arrangements of small circles for Morse-Bott functions]{Arrangements of small circles for Morse-Bott functions and regions surrounded by them}
\author{Naoki kitazawa}
\keywords{Singularity theory (of Morse-Bott functions). Arrangements (of circles). (Non-singular) real algebraic manifolds and real algebraic maps. Poincar\'e-Reeb Graphs. Reeb graphs. \\
\indent {\it \textup{2020} Mathematics Subject Classification}: Primary~14P05, 14P10, 52C15, 57R45. Secondary~ 58C05.}

\address{Institute of Mathematics for Industry, Kyushu University, 744 Motooka, Nishi-ku Fukuoka 819-0395, Japan\\
 TEL (Office): +81-92-802-4402 \\
 FAX (Office): +81-92-802-4405 \\
}
\email{n-kitazawa@imi.kyushu-u.ac.jp, naokikitazawa.formath@gmail.com}
\urladdr{https://naokikitazawa.github.io/NaokiKitazawa.html}
\maketitle
\begin{abstract}
As a topic of mathematics, "arrangements", systems of hyperplanes, circles, and general (regular) submanifolds, attract us strongly.  We present a natural elementary study of arrangements of circles. It is also a kind of new studies. Our study is closely related to geometry and singularity theory of Morse(-Bott) functions. 
Regions surrounded by circles are regarded as images of real algebraic maps and composing them with projections gives {\it Morse-Bott} functions: this observation is natural, and surprisingly, recently presented first, by the author.

 We present a systematic way of constructing such arrangements by choosing small circles centered at existing circles inductively. We are interested in graphs the regions surrounded by the circles naturally collapse. We have studied local changes of the graphs in adding these circles. These graphs are essentially so-called {\it Reeb graphs} of the previous Morse-Bott functions: they are spaces of all components of preimages of single points for the functions.

\end{abstract}
\section{Introduction.}
\label{sec:1}

Arrangements have attracted us for a long time. They are families of regular submanifolds in the given spaces. For example, arrangements of hyperplanes and subspaces of vector spaces are actively studied. Generalizations to more general regular submanifolds are natural. Studies on these generalizations are still new.

It is very difficult to understand and explain related studies all. \cite{tamaki2, tamaki3} are from webpages \cite{tamaki1}, explaining various mathematics, especially, algebraic topology. They are managed by Dai Tamaki and supported by various mathematicians. \cite{tamaki2, tamaki3} explains arrangements. We are concerned with circles. It is also very difficult to understand and explain the whole world of related studies. For example, \cite{carmesinschulz} is a related study and 
{\cite[Section 1 Related work]{carmesinschulz}} also implies that it is difficult to explain the whole world of related studies precisely.

Our interest lies in arrangements closely related to singularity theory of differentiable maps and applications to various geometry of manifolds, seeming to be a kind of new studies surprisingly. We are interested in the family of circles and the region surrounded by the circles. Furthermore, the region is seen as the image of a natural real algebraic map such that its composition with a projection to a line is a so-called {\it Morse-Bott} function. As further interest, we are trying to have explicit families of such explicit functions, including natural projections of naturally embedded sphere or so-called round spheres as simplest cases. 

The present study is on fundamental studies of the arrangements. We define a certain class: {\it arrangements of circles for Morse-Bott functions}, {\it MB circle arrangements} or {\it MBC arrangements}. We also study explicit construction. 

We present related differential topology and real algebraic geometry with singularity theory shortly again. Of course we do not need to understand related arguments and studies. Existence of nice smooth, analytic, or real algebraic functions and explicit construction of them are different. The former is solved by methods in differential equation and homotopy theory, for example. The latter is rather difficult where it is of course natural, fundamental, and important in geometry. It is difficult to find explicit nice maps of a certain class generalizing certain nice classes of so-called {\it Morse} functions and natural projections of round spheres. For example, for manifolds represented as products of two spheres, slight generalizations obtained as so-called bundles, or manifolds obtained by connected them naturally (or considering so-called connected sums), such construction has been difficult. The author have founded related construction (\cite{kitazawa1, kitazawa2, kitazawa3}). We omit precise exposition on related further topics in the present paper and leave them as another problem. For related real algebraic geometry, see \cite{kollar}, surveying real algebraic geometry closely related to some differential topology. See also a textbook \cite{bochnakcosteroy} for real algebraic geometry.
For related studies on elementary real algebraic geometry and construction of real algebraic functions and maps by the author, see \cite{kitazawa5, kitazawa6, kitazawa7} for example. We do not need to understand them of course.
\subsection{Regions surrounded by our arrangements of circles are naturally images of natural real algebraic maps and compositions with projections are Morse-Bott functions: observations presented first by the author. }\label{subsec:1.1}
We define our arrangements of circles, essentially coming from our original study \cite{kitazawa5, kitazawa7}. 

Hereafter, let ${\mathbb{R}}^n$ denote the $n$-dimensional Euclidean space, a smooth manifold equipped with the standard Euclidean metric. Let $||x|| \geq 0$ denote the distance between $x \in {\mathbb{R}}^n$ and the origin $0 \in {\mathbb{R}}^n$. Let $\mathbb{R}:={\mathbb{R}}^1$. Let ${\pi}_{m,n,i}:{\mathbb{R}}^m \rightarrow {\mathbb{R}}^n$ with $m>n \geq 1$ be the canonical projection mapping $x=(x_1,x_2) \in {\mathbb{R}}^{n} \times {\mathbb{R}}^{m-n}={\mathbb{R}}^m$ to $x_i$ for $i=1,2$.
In addition, for a subspace $Y \subset X$ of a topological space $X$, let $\overline{Y}$ denote the closure considered in the space $X$. 
Let $Y^{\circ}$ denote the interior of $Y$ in $X$. In most cases, we can guess the underlying space $X$ easily. We omit related notation or exposition unless otherwise stated. For a manifold $X$ whose boundary is non-empty, let $\partial X$ denote the boundary.  
\begin{Def}
\label{def:1}
We choose a circle $S _{x_0,r}:=\{x \mid ||x-x_0||=r\} \subset {\mathbb{R}}^2$ with $x_0=(x_{0,1},x_{0,2}) \in {\mathbb{R}}^2$ and $r>0$. We call the points $(x_{0,1} \pm r,x_{0,2})$ {\it horizontal} poles.  We call the points $(x_{0,1},x_{0,2} \pm r)$ {\it vertical} poles.
\end{Def}
\begin{Def}
\label{def:2}
A pair of a family $\mathcal{S}=\{S _{x_j,r_j}:=\{x_j \mid ||x_j-x_{j,0}||=r_j\} \subset {\mathbb{R}}^2\}$ of circles satisfying the following condition and a bounded connected region $D_{\mathcal{S}}$ surrounded by $\mathcal{S}$ is called an {\it arrangement of circles for Morse-Bott functions}. We also call it an {\it MB circle arrangement} or {\it MBC arrangement}. 
\begin{enumerate}
\item $D_{\mathcal{S}} \subset {\mathbb{R}}^2$ is a bounded connected component of the complement ${\mathbb{R}}^2-{\bigcup}_{S _{x_j,r_j} \in \mathcal{S}} S _{x_j,r_j}$ and $\overline{D_{\mathcal{S}}} \bigcap S _{x_j,r_j}$ is not empty for each circle $S _{x_j,r_j} \in \mathcal{S}$.
\item Two distinct circles $S_{x_{i_1},r_{i_1}}$ and $S_{x_{i_2},r_{i_2}}$ from $\mathcal{S}$ intersect in $\overline{D_{\mathcal{S}}}$ obeying the following: for each point $p_{i_1,i_2}$ in such an intersection, the sum of the tangent vector spaces of $S_{x_{i_1},r_{i_1}}$  and $S_{x_{i_2},r_{i_2}}$ at $p_{i_1,i_2}$ coincides with the tangent vector space of ${\mathbb{R}}^2$ at $p_{i_1,i_2}$ and distinct three circles from $\mathcal{S}$ do not intersect there. In addition, these points in these intersections are not vertical poles or horizontal poles of the circles from $\mathcal{S}$.

\end{enumerate}
\end{Def}

With additional data of some positive integers assigned to circles, we have a natural real algebraic map onto $\overline{D_{\mathcal{S}}}$. We present this shortly according to \cite{kitazawa7}.

The set $D_{\mathcal{S}}$ is represented as the intersection ${\bigcap}_{S_{x_j,r_j} \in \mathcal{S}} \{x \mid  f_j(x)>0\}$ where $S_{x_j,r_j}$ is the zero set of a suitable polynomial $f_j$ of degree $2$. 
For each $S_{x_j,r_j} \in \mathcal{S}$, we assign an element of a finite set. As a result we have a map. We redefine this as a surjection by considering a suitable subset of it. As another important condition, at two distinct circles $S_{x_{j_1},r_{j_1}}$ and $S_{x_{j_2},r_{j_2}}$ intersecting at some points in the closure $\overline{D_{\mathcal{S}}}$ of the regions, the values of the map are always distinct.
Let the surjection be denoted by $m_{\mathcal{S},A}: \mathcal{S} \rightarrow A$.
For each $a \in A$, we assign a positive integer $i(a)$. We can define the zero set $M_{D_{\mathcal{S}},m_{\mathcal{S},A},i,f}:=\{(x,y) \in {\mathbb{R}}^2 \times {\prod}_{a \in A} {\mathbb{R}}^{i(a)+1} \mid  {\prod}_{S_{x_j,r_j} \in {m_{\mathcal{S},A}}^{-1}(a)} (f_{j}(x))- {\Sigma}_{j_a=1}^{i(a)} {y_{a,j_a}}^2=0\}$ ($y_a$ is the component of $y \in {\prod}_{a \in A} {\mathbb{R}}^{i(a)+1}$ corresponding to $a \in A$ and $y_{a,j_a}$ denotes the $j_a$-th component). The projection to $\overline{D_{\mathcal{S}}} \subset {\mathbb{R}}^2$ gives our desired map $f_{D_{\mathcal{S}},m_{\mathcal{S},A},i,f}:M_{D_{\mathcal{S}},m_{\mathcal{S},A},i,f} \rightarrow {\mathbb{R}}^2$. The composition with ${\pi}_{2,1,i}:{\mathbb{R}}^2 \rightarrow \mathbb{R}$ is a so-called {\it Morse-Bott} function. We can naturally generalize this construction for general pairs of the dimensions and general hypersurfaces (we consider regions surrounded by hypersurfaces in a general Euclidean space).

Hereafter, $D^k:=\{x \in {\mathbb{R}}^k \mid ||x|| \leq 1\}$ is the $k$-dimensional unit disk and $S^k:=\{x \in {\mathbb{R}}^{k+1} \mid ||x||=1\}$ is the $k$-dimensional unit sphere.
\begin{Ex}\label{ex:1}
We consider the case $\mathcal{S}=\{S^1\}$ and $D_{\mathcal{S}}={(D^2)}^{\circ}$. We have the so-called canonical projection of the unit sphere $S^2$ into ${\mathbb{R}}^2$. See FIGURE \ref{fig:1}, presented later.
\end{Ex}
\subsection{A Poincar\'e-Reeb graph, our region surrounded by the circles collapses.}
\label{subsec:1.2}
We can define a graph for $(\mathcal{S},D_{\mathcal{S}})$ in the following way. We consider the space of all connected components of all preimages of all single points for the function ${\pi}_{2,1,i} {\mid}_{D_{\mathcal{S}}}$, the restriction of the projection. We topologize the set with the quotient topology of the closure $\overline{D_{\mathcal{S}}}$. We can naturally regard this as a graph where we present rigorous rules for vertices later. We call the graph a {\it Poincar\'e-Reeb graph} of $D_{\mathcal{S}}$, respecting \cite{bodinpopescupampusorea, kohnpieneranestadrydellshapirosinnsoreatelen, sorea1, sorea2}. For the case of Example \ref{ex:1}, check FIGURE \ref{fig:1}.

This is also regarded as a so-called {\it Reeb graph} of a natural Morse-Bott function, obtained as the function ${\pi}_{2,1,i} \circ f_{D_{\mathcal{S}},m_{\mathcal{S},A},i,f}:M_{D_{\mathcal{S}},m_{\mathcal{S},A},i,f} \rightarrow \mathbb{R}$. The {\it Reeb graph} $W_c$ of a smooth function $c$ is the space of all connected components of all preimages of all single points for the smooth function $c$ and topologized similarly. We also present rigorously later. This is a classical object in theory of Morse functions, more general nice smooth functions and applications to theory of manifolds. Reeb graphs compactly represent the manifolds. \cite{reeb} is a related pioneering study. 
\subsection{A short presentation of our main result: a list of local changes of Poincar\'e-Reeb graphs by changing MBC arrangements with additions of small circles.}
\label{subsec:1.3}
In short, we have a kind of complete lists of local changes of Poincar\'e-Reeb graphs by changing a given MBC arrangement into another MBC arrangement by an addition of a (generic) small circle.

The next section is mainly for preliminaries. We discuss our main result (Theorems \ref{thm:1}--\ref{thm:5}) in the third section.
\section{Preliminaries.}
\label{sec:2}
\subsection{Smooth manifolds and maps.}
\label{subsec:2.1}
Consider a topological space naturally represented as a cell complex such as a (topological) manifold, a polyhedron, a graph, and a polyhedron. The dimension is a topological invariant for such spaces and non-negative integers. Let $\dim X$ denote the dimension of $X$.
For a smooth manifold $X$, let $T_xX$ denote the tangent vector space at $x \in X$. Let $c:X \rightarrow Y$ be a differentiable map from a differentiable manifold $X$ into another differentiable manifold $Y$. Let ${dc}_x:T_x X \rightarrow T_{c(x)}Y$ denote the differential of $c$ at $x \in X$: this is a linear map between the tangent vector spaces. If the rank of ${dc}_x$ is smaller than both the dimensions $\dim X$ and $\dim Y$, then $x$ is a {\it singular point} of $c$.
A {\it diffeomorphism} on a smooth manifold is a smooth map being also a homeomorphism having no singular points. The space of all smooth maps between two smooth manifolds is endowed with the so-called {\it Whitney $C^{\infty}$ topology}. We do not explain this topology precisely. In short, this respects not only the values of the maps but also the derivations of the maps globally. \cite{golubitskyguillemin} presents related introductory theory and (classical) advanced theory. Here, we do not need to understand Morse(-Bott) functions or general related singularity theory. However, we only note that a smooth function $c:X \rightarrow \mathbb{R}$ on a manifold $X$ with no boundary is a Morse function if at each singular point $p$ of the function it is locally represented by the map mapping $(x_1,\cdots, x_m)$ to ${\Sigma}_{j=1}^{i(p)} {x_j}^2-{\Sigma}_{j=1}^{m-i(p)} {x_{i(p)+j}}^2+c(p)$ for suitable local coordinates and a suitable (uniquely defined) integer $0 \leq i(p) \leq m$. We also note that a {\it Morse-Bott} function is a smooth function such that at each singular point of it the function is represented as the composition of a {\it submersion}, a smooth map which has no singular point, with a Morse function. Of course a Morse function is Morse-Bott.

We do not need to understand real algebraic geometry. For related survey, see \cite{kollar}. We only note that our real algebraic manifold $M$ is a union of connected components of the zero set of a real polynomial map and in some Euclidean space ${\mathbb{R}}^m$. Our real algebraic map is the canonical embedding of the set into the Euclidean space or the composition of this with the projection ${\pi}_{m,n,i}$. Furthermore, our real algebraic manifolds are also {\it non-singular}: this is defined via the implicit function theorem for the real polynomial map.   

\subsection{Several graphs: Reeb graphs and Poincar\'e-Reeb graphs again.}\label{subsec:2.2}
A {\it graph} is a $1$-dimensional CW complex. $0$-cells are {\it vertices} and the set of all $0$-cells is the {\it vertex set} of the graph.  
$1$-cells are {\it edges} and the set of all $1$-cells is the {\it edge set} of the graph. We can orient the edge set or each edge of the graph. We call such a graph an {\it oriented graph} or a {\it digraph}.
We can label the vertex set or each vertex by real numbers for a graph.  We call such a graph a {\it V-graph}.
An {\it oriented V-graph} means an oriented graph compatible with the values assigned to the vertices: in other words, if an edge connecting two vertices $v_1$ and $v_2$ is oriented as an edge departing from $v_1$ and entering $v_2$, then the value at $v_2$ is greater than the value at $v_1$. For example, this means that a loop, or an edge from a vertex to the same vertex, is not contained in any oriented V-graph. 

For two oriented V-graphs, we can naturally define an {\it isomorphism}. A piecewise smooth map between two oriented V-graphs is an {\it isomorphism} if the following are satisfied.
\begin{itemize}
\item The map is a piecewise smooth homeomorphism mapping a vertex set into another vertex set. By this only, the map is an {\it isomorphism} of the graphs and the graphs are {\it isomorphic}. 
\item The map preserves orientations of edges. By this and the previous condition only, the map is an {\it isomorphism} of the digraphs and the digraphs are {\it isomorphic}. 
\item The map preserves the orders of values at vertices.
\end{itemize}
We also say that they are {\it isomorphic} as oriented V-graphs or V-digraphs.

For a graph, the {\it degree} of a vertex means the number of edges containing the vertex.

The {\it Reeb space} $W_c$ of a smooth function $c:X \rightarrow \mathbb{R}$ is the quotient space $X/{\sim c}$ defined by the equivalence relation ${\sim}_c$: we define as $x_1 \sim x_2$ if and only if they belong to a same connected component of a same preimage $c^{-1}(y)$. Let $q_c:X \rightarrow W_c$ denote the quotient map. By defining the set of all vertices as follows, this is regarded as a graph in considerable cases. For example, the case the set of all values realized as values at some singular points of $c$ is finite is one of such cases. This is shown in \cite{saeki1}. This explains cases of Morse-Bott functions for example. We consider these cases unless otherwise stated. 
A point $v$ in $W_c$ is a vertex if and only if ${q_f}^{-1}(v)$ contains some singular points of the function $c$. 
We call this graph the {\it Reeb graph} of $c$.
We can define a natural continuous function $\bar{c}:W_c \rightarrow \mathbb{R}$ by the relation $c=\bar{c} \circ q_c$ in a unique way. We can naturally orient the edges. We can label each vertex by the value of the function there.
We have an oriented V-graph and call it the {\it Reeb V-digraph} of $c$.

Hereafter, we abuse the notation in Definition \ref{def:2} and "Subsection \ref{subsec:1.1}". The {\it Poincar\'e-Reeb graph of an MBC arrangement $(\mathcal{S},D_{\mathcal{S}})$ or the region $D_{\mathcal{S}}$ defined for ${\pi}_{2,1,i}$} is the quotient space of $\overline{D_{\mathcal{S}}}/{\sim}_{{\pi}_{2,1,i}}$ defined by the equivalence relation ${\sim}_{{\pi}_{2,1,i}}$: we define $x_1 {\sim}_{{\pi}_{2,1,i}} x_2$ if and only if they belong to a same connected component of a same preimage ${{\pi}_{2,1,i}}^{-1}(y)$. Let $q_{D_{\mathcal{S}},i}:\overline{D_{\mathcal{S}}} \rightarrow \overline{D_{\mathcal{S}}}/{\sim}_{{\pi}_{2,1,i}}$ denote the quotient map. A {\it pre-vertex} $v$ is defined as a point whose preimage ${q_{D_{\mathcal{S}},i}}^{-1}(v)$ contains vertical poles of circles in $\mathcal{S}$ or intersections of distinct circles in $\mathcal{S}$ for $i=1$.  A {\it pre-vertex} $v$ is defined as a point whose preimage ${q_{D_{\mathcal{S}},i}}^{-1}(v)$ contains horizontal poles of circles in $\mathcal{S}$ or intersections of distinct circles in $\mathcal{S}$ for $i=2$.
It seems to be clear that the resulting space has the structure of a graph. However, we give rigorous exposition for this. 

Hereafter, we assume elementary knowledge on (fiber) bundles.

\begin{Prop}
	\label{prop:1}
	The quotient space $\overline{D_{\mathcal{S}}}/{\sim}_{{\pi}_{2,1,i}}$ is a graph by defining the vertex set as the set of all pre-vertices here. We call this graph the {\rm Poincar\'e-Reeb graph of an MBC arrangement $(\mathcal{S},D_{\mathcal{S}})$ or the region $D_{\mathcal{S}}$ defined for ${\pi}_{2,1,i}$}.
\end{Prop}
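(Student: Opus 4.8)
The plan is to analyze the restriction $f:=\pi_{2,1,i}{\mid}_{\overline{D_{\mathcal{S}}}}$ directly, using that $\overline{D_{\mathcal{S}}}$ is a compact curvilinear polygon whose boundary $\partial \overline{D_{\mathcal{S}}}$ is a union of finitely many circular arcs meeting at the transverse corners guaranteed by Definition \ref{def:2}. First I would record finiteness: the set $V \subset \overline{D_{\mathcal{S}}}$ consisting of the relevant poles (vertical ones when $i=1$, horizontal ones when $i=2$) together with the pairwise intersection points of circles in $\mathcal{S}$ is finite, since $\mathcal{S}$ is finite, each circle carries at most two poles of the given type, and two distinct circles meet in at most two points. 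Hence the set of critical values $f(V)=\{c_1<\cdots<c_k\}$ is finite, and every pre-vertex, being the class of a point of $V$, lies over some $c_j$; in particular the candidate vertex set is finite.

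Next I would establish a product structure over the regular values. For an open interval $J$ lying strictly between consecutive elements of $f(V)$ (including the pieces below $c_1$ and above $c_k$), every line $\{x_i=t\}$ with $t\in J$ meets $\partial \overline{D_{\mathcal{S}}}$ transversally and avoids all poles and corners, so each level set $f^{-1}(t)$ is a compact $1$-manifold with boundary, i.e. a finite disjoint union of closed intervals whose number of components is constant on $J$. I would make this precise by lifting $\partial/\partial t$ to a vector field on $f^{-1}(J)$ that is tangent to the boundary arcs (this is possible exactly because those arcs are transverse to the level lines over $J$) and integrating it; the flow identifies $f^{-1}(J)$ with a product $J\times F$, where $F$ is a finite disjoint union of closed intervals. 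Passing to the quotient, $q_{D_{\mathcal{S}},i}(f^{-1}(J))$ is a disjoint union of finitely many copies of $J$, that is, finitely many open edges.

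The core of the argument is a local model near each point $p\in V$. If $p$ is a pole, the unique boundary arc through $p$ is tangent to the level line and $\overline{D_{\mathcal{S}}}$ lies locally on one side, so $f$ restricted to the relevant component is the standard fold: the component either collapses to the point $p$ as $t\to c_j$, giving a degree-one endpoint when $\overline{D_{\mathcal{S}}}$ lies on the concave side, or passes through $p$ without branching, giving a degree-two point on the convex side; both are admissible $0$-cells. If $p$ is an intersection point, then by Definition \ref{def:2} exactly two circles cross transversally there and neither arc is tangent to the level line, so $\overline{D_{\mathcal{S}}}$ is locally the curvilinear wedge cut out by the two transverse arcs; a short normal-form computation shows that the components of the level sets merge or split across $c_j$ in a standard saddle-type fashion, and a small neighborhood of the corresponding pre-vertex in the quotient is homeomorphic to a cone on finitely many points (generically three). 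In every case a sufficiently small neighborhood of the pre-vertex in the quotient is a cone on a finite set, hence one-dimensional and of finite degree.

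Finally I would assemble the pieces: the quotient is the union of the finitely many open edges produced over the regular intervals and the finitely many pre-vertices, and the attaching data are exactly the limits of level-set components as $t\to c_j^{\pm}$, which the local models show to be well defined and finite. Since every branch point and every endpoint of the quotient occurs at a point of $V$, taking the pre-vertices as the $0$-cells and the remaining open arcs as the $1$-cells exhibits $\overline{D_{\mathcal{S}}}/{\sim}_{{\pi}_{2,1,i}}$ as a finite one-dimensional CW complex, which is the assertion. I expect the intersection (corner) case to be the main obstacle: because the two crossing arcs are only transverse, not tangent, to the level lines, one must enumerate the possible positions of $\overline{D_{\mathcal{S}}}$ relative to the wedge and check in each that the level sets reorganize into a finite cone, so that no pre-vertex acquires infinite degree or a two-dimensional neighborhood. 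As an alternative, once the Morse-Bott property of $\pi_{2,1,i}\circ f_{D_{\mathcal{S}},m_{\mathcal{S},A},i,f}$ is in hand, one may identify $\overline{D_{\mathcal{S}}}/{\sim}_{{\pi}_{2,1,i}}$ with the Reeb graph of that function (whose fibers over $D_{\mathcal{S}}$ are connected, being products of positive-dimensional spheres, so that the two Reeb quotients coincide), and then invoke the finiteness of critical values of a Morse-Bott function on the compact manifold $M_{D_{\mathcal{S}},m_{\mathcal{S},A},i,f}$ together with \cite{saeki1}.
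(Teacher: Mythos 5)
Your proposal is correct and shares its skeleton with the paper's proof --- finiteness of the set of pre-vertices, then an Ehresmann-type trivialization of ${\pi}_{2,1,i}{\mid}_{\overline{D_{\mathcal{S}}}}$ over the complement of the critical levels, giving finitely many trivial $D^1$-bundles over intervals --- but the two arguments part ways at the last step. The paper does not carry out the local analysis at the pre-vertices: it asserts that the same bundle argument "works similarly" near a vertex and then delegates the conclusion to \cite[Theorem 2.12]{saeki2}, the general theorem on Reeb spaces of functions on compact cornered manifolds. You instead build the one-dimensional CW structure by hand from explicit local models (folds at the tangential poles, wedges at the transverse corners), which makes the proof self-contained and is essentially the same case analysis one needs anyway for Theorems \ref{thm:2} and \ref{thm:3}; the cost, as you correctly flag, is that you must actually enumerate the positions of the wedge relative to the level lines. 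Two cautions. First, the poles that matter for $i=1$ are the points $(x_{0,1}\pm r,x_{0,2})$ at which the circle's tangent line is vertical, i.e.\ at which the level line $\{x_1=t\}$ is tangent to the circle; your geometric description ("the boundary arc through $p$ is tangent to the level line, so $f$ is a fold there") picks out exactly these points, but Definition \ref{def:1} labels them \emph{horizontal} poles while the definition of pre-vertex for $i=1$ speaks of \emph{vertical} poles, so state explicitly which points you mean rather than relying on the labels. Second, near a transverse intersection of two circles the region occupies exactly one of the four local sectors, so the corner pre-vertex has a neighborhood in the quotient which is a cone on one or two points (a birth/death or a pass-through), not "generically three"; degree-three branching occurs only at poles where $\overline{D_{\mathcal{S}}}$ lies on the concave side and a fiber component splits. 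Your closing alternative via the Morse--Bott function and \cite{saeki1} also works once the fibers of $f_{D_{\mathcal{S}},m_{\mathcal{S},A},i,f}$ over points of $D_{\mathcal{S}}$ are checked to be connected, but note the paper deliberately avoids that route here and treats the identification of the two quotients as a consequence rather than an ingredient.
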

\begin{proof}
We give very general proof applying some theory from \cite{saeki1, saeki2}.
First, the nature of circles, our assumption on their intersections, and compactness imply that the set of all pre-vertices is finite. We respect this implicitly and argue.

For any point $t \in \overline{D_{\mathcal{S}}}/{\sim}_{{\pi}_{2,1,i}}$ which is not a vertex and its small neighborhood $N(t)$, consider the preimage ${q_{D_{\mathcal{S}},i}}^{-1}(N(t))$ and remove the preimage ${q_{D_{\mathcal{S}},i}}^{-1}(t)$ of the chosen point. The resulting space is divided into two connected components, represented as trivial bundles over intervals whose projections are defined by the projection $q_{D_{\mathcal{S}},i}$ and whose fibers are diffeomorphic to the closed interval $D^1$. Here we apply arguments of (a variant of) Ehresmann's theorem. For a vertex $t$, we consider the same argument and we can argue similarly and have a similar result. We have finitely many connected components, represented as trivial bundles over intervals whose projections are defined by the projection $q_{D_{\mathcal{S}},i}$ and whose fibers are diffeomorphic to the closed interval $D^1$. 

For example, \cite[Theorem 2.12]{saeki2} implies that for the cornered smooth compact manifold $\overline{D_{\mathcal{S}}}$ and the map ${\pi}_{2,1,i} {\mid}_{\overline{D_{\mathcal{S}}}}$, the quotient space has the structure of a graph defined in the presented way.
\end{proof}

Remark \ref{rem:1} is our summary on facts and arguments we implicitly use hereafter.
A {\it smooth} bundle means a bundle whose fiber is a smooth manifold and whose structure group is a group of diffeomorphisms on the fiber.
In addition, hereafter, a (straight) segment or line is {\it horizontal} ({\it vertical}) if it is represented as a connected subset of the straight line by the form $\{(x_1,t) \mid x_1 \in \mathbb{R}\}$ (resp. $\{(t,x_2) \mid x_2 \in \mathbb{R}\}$) for a real number $t$. The case $t=0$ is for the axis (axes).
\begin{Rem} 
	\label{rem:1}
	We use Ehresmann's theorem in our proof of Proposition \ref{prop:1}. It implies that a smooth surjection with no singular point is the projection of a smooth bundle if it is {\it proper}: a {\it proper} map is a map between topological spaces such that the preimage of any compact set is compact.

	In our case, each fiber $F$ is a vertical segment of the form $\{(p_{F,0},t_2) \mid p_{F,1} \leq t_2 \leq p_{F,2}\}$ connecting one point $(p_{F,0}, p_{F,1})$ in the boundary $\overline{D_{\mathcal{S}}}$ and another point $(p_{F,0}, p_{F,2})$ there. This also gives a smooth bundle over any interval regarded as an interval embedded in the interior of an edge.
	
	Moreover, the interior of the vertical segment consists of points of the interior $\overline{D_{\mathcal{S}}}-D_{\mathcal{S}}$.

	Furthermore, we consider the complementary set of the set of all points contained in exactly two circles from $\mathcal{S}$ in $\overline{D_{\mathcal{S}}}-D_{\mathcal{S}}$. This manifold ${\partial}_{\rm s} {D_{\mathcal{S}}}$ is regarded as a $1$-dimensional smooth manifold. The points $(p_{F,0}, p_{F,1})$ and $(p_{F,0}, p_{F,2})$ are in the interior of the $1$-dimensional smooth manifold. Here the sum of the tangent space of $F$ at $p$ and the tangent space of ${\partial}_{\rm s} {D_{\mathcal{S}}}$ at $p$ coincides with the tangent space of ${\mathbb{R}^2}$ at $p$.
	
	As another remark, of course, we can use more geometric way to understand that our space of Proposition \ref{prop:1} is a graph. In short, for any (converging) sequence in the base space of a smooth bundle whose fiber is diffeomorphic to $F$ and $D^1$, converges to a point of the base space, regarded as an interval in the space $\overline{D_{\mathcal{S}}}/{\sim}_{{\pi}_{2,1,i}}$, or a pre-vertex of the space $\overline{D_{\mathcal{S}}}/{\sim}_{{\pi}_{2,1,i}}$.
\end{Rem}
Let $W_{D_{\mathcal{S}},i}$ denote the Poincar\'e-Reeb graph $\overline{D_{\mathcal{S}}}/{\sim}_{{\pi}_{2,1,i}}$ defined for ${\pi}_{2,1,i}$. We can also naturally orient the edges in this case. We can also label each vertex by the value of the function similarly: we can define a natural unique continuous function $m_{D_{\mathcal{S}},i}:W_{D_{\mathcal{S}},i} \rightarrow \mathbb{R}$ by the relation same as one for the case of Reeb graphs. We have an oriented V-graph and call it a {\it Poinar\'e-Reeb V-digraph} of $D_{\mathcal{S}}$. 

In the previous section, we have introduced that the composition of a natural real algebraic map $f_{D_{\mathcal{S}},m_{\mathcal{S},A},i,f}:M_{D_{\mathcal{S}},m_{\mathcal{S},A},i,f} \rightarrow {\mathbb{R}}^2$ onto the closure $\overline{D_{\mathcal{S}}}$ of the region $D_{\mathcal{S}}$ with ${\pi}_{2,1,i}:{\mathbb{R}}^2 \rightarrow \mathbb{R}$ is a Morse-Bott function. We can check that this V-digraph $W_{D_{\mathcal{S}},i}$ and the Reeb V-digraph $W_{{\pi}_{2,1,i} \circ f_{D_{\mathcal{S}},m_{\mathcal{S},A},i,f}}$ of the corresponding Morse-Bott function are isomorphic. Hereafter, we do not need or use such arguments on such Morse-Bott functions. Our paper concerns arrangements of circles, essentially.

FIGURE \ref{fig:1} is for the case of Example \ref{ex:1}.
 \begin{figure}
\includegraphics[width=80mm,height=27.5mm]{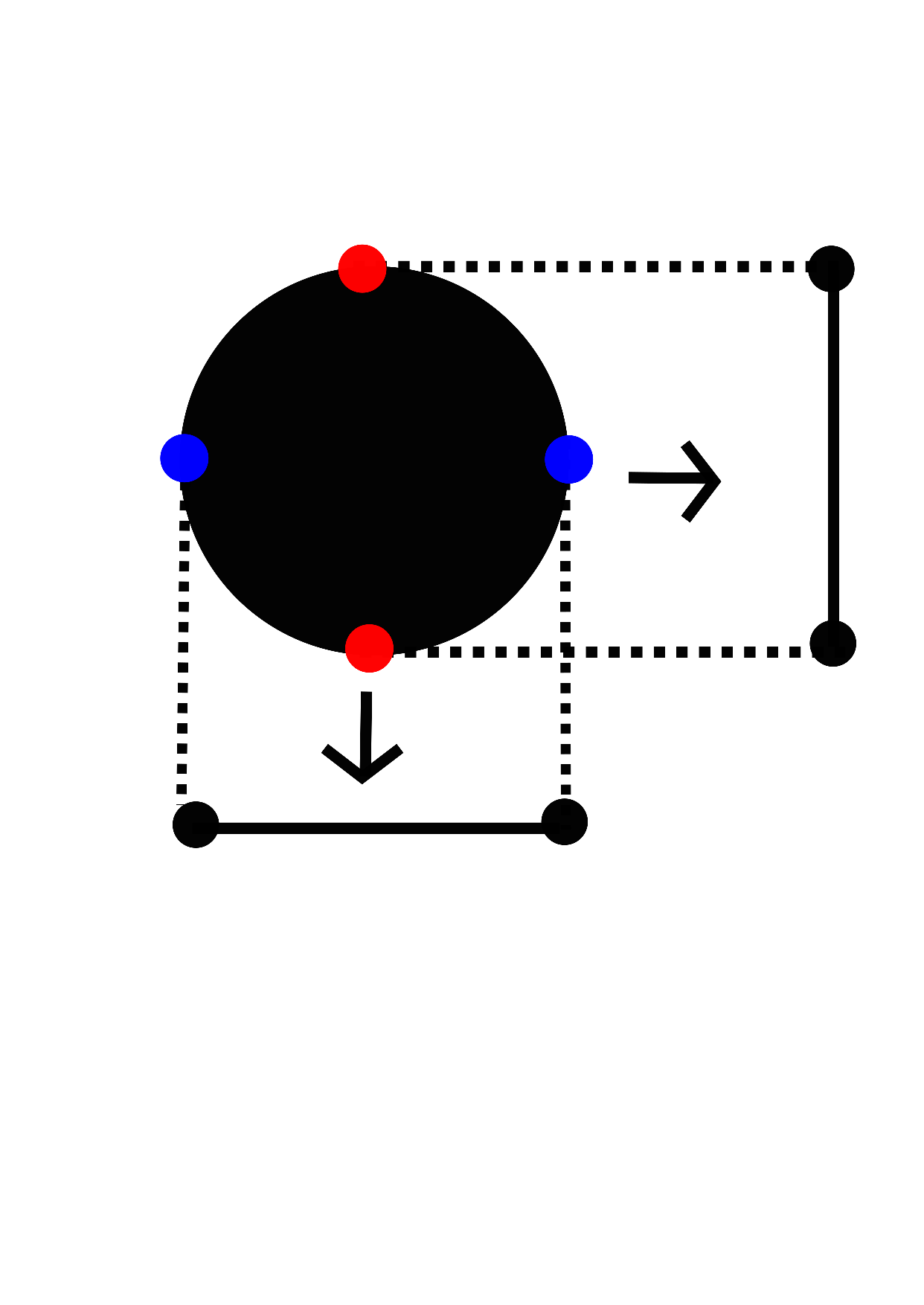}
\caption{The case of Example \ref{ex:1}. Blue dots show vertical poles and red dots show horizontal poles. Graphs are Poinar\'e-Reeb graphs of $D_{\mathcal{S}}$.}
\label{fig:1}
\end{figure}
\subsection{Some observations on circle arrangements.}
\label{subsec:2.3}
Some elementary transformations produce MBC arrangements which are different from the original ones and which are "congruent" to them. We present this property as follows. We can easily check from fundamental properties of the transformations.
\begin{Prop}
\label{prop:2}
\begin{enumerate}
	\item \label{prop:2.1} By choosing two real numbers $x_{0,1},x_{0,2} \in \mathbb{R}$ and mapping each $(x_1,x_2)$ to $(x_1+x_{0,1},x_2+x_{0,2})$, we have another MBC arrangement. In other words, we consider a parallel transformation. This preserves the Poincar\'e-Reeb V-digraphs $W_{D_{\mathcal{S}},i}$ up to isomorphisms.
\item \label{prop:2.2} By choosing $x_0 \in \mathbb{R}$ and mapping each $(x_1,x_2)$ to $(2x_0-x_1,x_2)$ or mapping each $(x_1,x_2)$ to $(x_1,2x_0-x_2)$, we have another MBC arrangement. In other words, we consider reflections around vertical lines and horizontal lines, respectively. This preserves the Poincar\'e-Reeb V-digraphs $W_{D_{\mathcal{S}},i}$ up to isomorphisms.
\item \label{prop:2.3} By choosing $(x_{0,1},x_{0,2}) \in {\mathbb{R}}^2$ and mapping each $(x_1,x_2)$ to $(x_{0,1}-(x_2-x_{0,2}),x_{0,2}+(x_1-x_{0,1}))$ or mapping each $(x_1,x_2)$ to $(x_{0,1}+(x_2-x_{0,2}),x_{0,2}-(x_1-x_{0,1}))$, we have another MBC arrangement. In other words, we consider rotations of degree $\pm \frac{\pi}{2}$ centered at the fixed single points.
\end{enumerate}
\end{Prop}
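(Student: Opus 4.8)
The plan is to treat the three items uniformly by exploiting that each prescribed map is an isometry $\Phi$ of ${\mathbb{R}}^2$: a translation in part (1), a reflection in a vertical or horizontal line in part (2), and a quarter-turn $\pm\frac{\pi}{2}$ in part (3). In particular $\Phi$ is a diffeomorphism of ${\mathbb{R}}^2$ that carries each circle $S_{x_j,r_j}$ to the circle of the same radius $r_j$ centered at $\Phi(x_{j,0})$. First I would record this at the polynomial level: if $f_j$ is the degree-two polynomial with $S_{x_j,r_j}=f_j^{-1}(0)$, then $f_j\circ\Phi^{-1}$ is again such a polynomial, so $\Phi(\mathcal{S})$ is a genuine family of circles and $\Phi$ restricts to a homeomorphism $\overline{D_{\mathcal{S}}}\to\overline{\Phi(D_{\mathcal{S}})}$.

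Next I would verify that $(\Phi(\mathcal{S}),\Phi(D_{\mathcal{S}}))$ satisfies Definition \ref{def:2}. Since $\Phi$ is a self-homeomorphism of the plane, $\Phi(D_{\mathcal{S}})$ is again a bounded connected component of the complement of $\bigcup_j\Phi(S_{x_j,r_j})$, and $\overline{\Phi(D_{\mathcal{S}})}\cap\Phi(S_{x_j,r_j})=\Phi(\overline{D_{\mathcal{S}}}\cap S_{x_j,r_j})\neq\emptyset$; this is condition (1). For condition (2), the differential of $\Phi$ is a linear isomorphism at every point, so $\Phi$ sends the transversality relation $T_pS_{x_{i_1},r_{i_1}}+T_pS_{x_{i_2},r_{i_2}}=T_p{\mathbb{R}}^2$ to the same relation at $\Phi(p)$ and carries triple points to triple points; hence pairwise transversality and the absence of triple intersections persist. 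The one genuinely circle-specific check is the pole condition, and I would dispatch it by a direct computation on displacements from centers: a translation and a reflection in a vertical or horizontal line send horizontal poles to horizontal poles and vertical poles to vertical poles, whereas a quarter-turn interchanges the two pole types (its linear part sends $(r,0)\mapsto(0,r)$ and $(0,r)\mapsto(-r,0)$). In every case $\Phi$ maps the full set of poles of $\mathcal{S}$ onto the full set of poles of $\Phi(\mathcal{S})$, so an intersection point that is not a pole goes to one that is not a pole. This already establishes that each item produces an MBC arrangement.

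For the graph statements in parts (1) and (2) I would use the key identity $\pi_{2,1,i}\circ\Phi=\alpha_i\circ\pi_{2,1,i}$ for a suitable affine self-map $\alpha_i$ of $\mathbb{R}$: for the translation $\alpha_i(t)=t+x_{0,i}$, while for a reflection $\alpha_i$ is the identity or $t\mapsto 2x_0-t$ according as the reflected axis is perpendicular or parallel to the fibers of $\pi_{2,1,i}$. Consequently $\Phi$ carries each fiber of $\pi_{2,1,i}$ onto a fiber, preserving connected components inside the regions, so it descends through the quotient maps $q_{D_{\mathcal{S}},i}$ and $q_{\Phi(D_{\mathcal{S}}),i}$ to a homeomorphism $\overline{\Phi}\colon W_{D_{\mathcal{S}},i}\to W_{\Phi(D_{\mathcal{S}}),i}$. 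Because $\Phi$ respects pole types and intersections, $\overline{\Phi}$ sends pre-vertices to pre-vertices and is therefore an isomorphism of the underlying graphs, and from $m_{\Phi(D_{\mathcal{S}}),i}\circ\overline{\Phi}=\alpha_i\circ m_{D_{\mathcal{S}},i}$ one reads off that, whenever $\alpha_i$ is increasing, the order of vertex values and hence all edge orientations are preserved, giving an isomorphism of oriented V-digraphs.

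The step I expect to require the most care is the orientation bookkeeping for part (2), since a reflection whose axis is parallel to the fibers of $\pi_{2,1,i}$ makes $\alpha_i$ the order-reversing map $t\mapsto 2x_0-t$, so that the canonical $\overline{\Phi}$ reverses every edge and one cannot read the V-digraph statement off directly. My resolution is to pair each reflection with the index whose fibers are \emph{transverse} to its axis: for that index $\alpha_i$ leaves the relevant coordinate untouched and $\overline{\Phi}$ is at once an isomorphism of oriented V-digraphs, which is the content being claimed, while the remaining reversed case is what the phrase ``up to isomorphism'' is meant to accommodate. Finally, no graph statement is asserted in part (3), and this is exactly as it should be: a quarter-turn satisfies $\pi_{2,1,1}\circ\Phi=\beta_1\circ\pi_{2,1,2}$ and $\pi_{2,1,2}\circ\Phi=\beta_2\circ\pi_{2,1,1}$ for affine $\beta_1,\beta_2$, so it exchanges the fibers of the two projections and thereby the roles of $W_{D_{\mathcal{S}},1}$ and $W_{D_{\mathcal{S}},2}$ rather than preserving either one.
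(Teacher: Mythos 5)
The paper offers no argument for this proposition beyond the sentence ``We can easily check from fundamental properties of the transformations,'' so your write-up is supplying the entire content. Most of it is correct and is surely what the author has in mind: each map is an isometry, hence carries circles to circles of the same radii, preserves connected components of the complement and their boundedness, preserves the transversality and no-triple-point conditions because its differential is a linear isomorphism, and maps the union of the horizontal and vertical pole sets of $\mathcal{S}$ onto that of $\Phi(\mathcal{S})$ (with the two types swapped by the quarter-turn), which is exactly what the pole clause of Definition \ref{def:2} needs. The intertwining identity ${\pi}_{2,1,i}\circ\Phi={\alpha}_i\circ{\pi}_{2,1,i}$ and the induced homeomorphism of quotients is also the right mechanism for the graph statements, and your observation that the quarter-turn exchanges the two projections, so that no preservation claim can be made in item (\ref{prop:2.3}), is correct.

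The gap is in your handling of item (\ref{prop:2.2}) for the index whose fibers are parallel to the reflection axis. There your canonical map $\overline{\Phi}$ reverses every edge orientation, and you dispose of this by declaring that the reversed case ``is what the phrase up to isomorphism is meant to accommodate.'' By the paper's own definition an isomorphism of oriented V-digraphs must preserve edge orientations, and a digraph need not be isomorphic to its reversal, so this is a reinterpretation rather than a proof --- and in fact the literal claim fails for that index. Concretely, take $\mathcal{S}=\{S^1, S_{p,r'}\}$ with $p=(\cos\theta,\sin\theta)$, $0<\theta<\frac{\pi}{2}$, and $r'$ small: by Theorem \ref{thm:2} (\ref{thm:2.1.1}) the V-digraph $W_{D_{\mathcal{S}},1}$ has a degree-$3$ vertex with one entering and two departing edges, while after reflecting in the vertical axis the corresponding degree-$3$ vertex has two entering and one departing edge; no digraph isomorphism can match them. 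So the honest conclusion is that a reflection in a vertical (resp. horizontal) line preserves $W_{D_{\mathcal{S}},2}$ (resp. $W_{D_{\mathcal{S}},1}$) up to isomorphism of oriented V-digraphs and sends the other one to its orientation-reversal only. The paper appears to use the proposition exactly in this weaker form --- the two orientation alternatives built into Theorem \ref{thm:2} (\ref{thm:2.1.1}) exist precisely to absorb the reversal when sign cases are reduced via Proposition \ref{prop:2} (\ref{prop:2.2}) --- but as written your proposal neither proves nor refutes the stronger statement it sets out to prove; you should state and prove the corrected version instead.
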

Remark \ref{rem:2} is also a kind of remarks on arguments we implicitly use.
\begin{Rem}
	\label{rem:2}
	We assume fundamental facts and arguments on elementary plane geometry and Euclidean geometry. For example, we need fundamental geometry on circles in the plane.	
	
	As another case, we consider subspaces of metric spaces with the induced metrics. Explicitly, we regard circles and more general subspaces as subspaces (with the induced metrics) of the Euclidean space ${\mathbb{R}}^2$ (with the standard Euclidean metric).
	\end{Rem}

\section{On our main result.}
\label{sec:3}
\subsection{A class of circle arrangements: circle-centered circle arrangements.}
\label{subsec:3.1}
\begin{Def}
\label{def:3}
A pair of a set $\mathcal{S}:=\{S_{x_{j},r_{j}}\}$ of circles of fixed radii in ${\mathbb{R}}^2$ and a region $D_{\mathcal{S}} \subset {\mathbb{R}}^2$ which is also one of a bounded connected component of the complementary set ${\mathbb{R}}^2-{\bigcup}_{S_{x_{j},r_{j}} \in \mathcal{S}} S_{x_{j},r_{j}}$ is called a {\it circle-centered} arrangement if we can have $\mathcal{S}$ inductively in the following way.
\begin{itemize}
\item Choose a set ${\mathcal{S}}_0$ of mutually disjoint circles $S_{x_{j,0},r_{j,0}} \in {\mathcal{S}}_0$ of fixed radii in ${\mathbb{R}}^2$ such that we can choose a bounded region $D_{{\mathcal{S}}_0}$ surrounded by all of them. In other words, the boundary $\partial \overline{D_{{\mathcal{S}}_0}}$ of the closure $\overline{D_{{\mathcal{S}}_0}}$ is the disjoint union of all circles $S_{x_{j,0},r_{j,0}}$. Of course the set of circles is non-empty.
\item Let ${\mathcal{S}}:={\mathcal{S}}_0$.
\item We consider the following procedure inductively.

First we choose a point $x_{j^{\prime}} \in S_{x_j,r_j}$ in a circle of $\mathcal{S}$. Second we choose a sufficiently small circle of a fixed radius $r_{j^{\prime}}>0$ which is centered at $x_{j^{\prime}}$. We define the new set ${\mathcal{S}}$ by adding the new element $S_{x_{j^{\prime}},r_{j^{\prime}}}$ to the original set. We also define the new region $D_{\mathcal{S}}$ as the intersection of the present region and the complement of the closed disk whose boundary is $S_{x_{j^{\prime}},r_{j^{\prime}}}$ in ${\mathbb{R}}^2$.
\end{itemize} 
\end{Def}
\subsection{The statement and its proof.}
\label{subsec:3.2}
\begin{Thm}
\label{thm:1}
\begin{enumerate}
\item \label{thm:1.1} Circle-centered arrangements are MBC arrangements{\rm :} we can call such arrangements {\rm MB circle-centered arrangements},  or {\rm MBCC arrangements}.
\item \label{thm:1.2} For an MBC arrangement $(\mathcal{S}:=\{S_{x_{j},r_{j}}\},D_{\mathcal{S}})$, we first choose a point $p=x_{j^{\prime}} \in S_{x_j,r_j} \bigcap \overline{D_{\mathcal{S}}}$
 in a circle of $\mathcal{S}$. We choose a sufficiently small circle of a fixed radius $r_{j^{\prime}}>0$ which is centered at $x_{j^{\prime}}$. We define the set ${\mathcal{S^{\prime}}}$ as the new set adding the new element $S_{x_{j^{\prime}},r_{j^{\prime}}}$ to the original set. We also define the new region $D_{\mathcal{S^{\prime}}}$ as the intersection of the present region and the complement of the closed disk whose boundary is $S_{x_{j^{\prime}},r_{j^{\prime}}}$ in ${\mathbb{R}}^2$. In other words, this is same as the procedure presented in Definition \ref{def:3}.
Then the resulting new pair $({\mathcal{S^{\prime}}},D_{\mathcal{S^{\prime}}})$ of the set of circles and the region is also an MBC arrangement. If the given MBC arrangement is an MBCC arrangement, then the resulting one is also an MBCC arrangement. 
\end{enumerate}
\end{Thm}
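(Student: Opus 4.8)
The plan is to isolate the geometric heart of the statement as the assertion in part \eqref{thm:1.2} that adjoining a sufficiently small circle to an MBC arrangement again yields an MBC arrangement, and to derive everything else from it. First I would reduce the verification to the two conditions of Definition \ref{def:2} for the new pair $(\mathcal{S}',D_{\mathcal{S}'})$, treating the contribution of the newly added circle $S_{x_{j'},r_{j'}}$ as a purely local modification near the chosen point $p=x_{j'}$. Throughout I would take $p$ in the relative interior of the boundary arc $S_{x_j,r_j}\bigcap\overline{D_{\mathcal{S}}}$, away from the existing intersection points, which is the harmless generic choice.

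The core computation is elementary plane geometry (Remark \ref{rem:2}). Since $p$ lies on $S_{x_j,r_j}$, the distance between the centers $x_j$ and $p$ equals $r_j$; two circles whose centers are at distance $r_j$ and whose radii are $r_j$ and $r_{j'}$ meet transversally in exactly two points precisely when $0<r_{j'}<2r_j$, so for every sufficiently small $r_{j'}$ the new circle crosses $S_{x_j,r_j}$ transversally at two points near $p$ and is tangent to nothing. I would then note that the two intersection points depend algebraically on $r_{j'}$ and converge to $p$ as $r_{j'}\to 0$, whence for all sufficiently small $r_{j'}$ they avoid every horizontal and vertical pole, lie on no third circle of $\mathcal{S}$, and $S_{x_{j'},r_{j'}}$ meets no circle of $\mathcal{S}$ other than $S_{x_j,r_j}$; this secures condition (2) of Definition \ref{def:2}, the preexisting intersection points being unaffected because they sit away from $p$. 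For condition (1) I would describe the local picture: the arc of $S_{x_{j'},r_{j'}}$ on the $D_{\mathcal{S}}$-side of $S_{x_j,r_j}$ splits $D_{\mathcal{S}}$ into the small lens cut off by the closed disk and its complement $D_{\mathcal{S}'}$; since the excision is confined to a small neighborhood of $p$, the set $D_{\mathcal{S}'}$ is a bounded connected component of $\mathbb{R}^2-{\bigcup}_{S\in\mathcal{S}'}S$, its closure still meets every old circle, and it meets $S_{x_{j'},r_{j'}}$ in a nonempty arc.

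With the MBC claim of \eqref{thm:1.2} in hand, part \eqref{thm:1.1} follows by induction on the number of steps in the construction of Definition \ref{def:3}: the base family $\mathcal{S}_0$ of pairwise disjoint circles bounding $D_{\mathcal{S}_0}$ satisfies condition (1) by hypothesis and condition (2) vacuously, and each inductive step is exactly the move analyzed in \eqref{thm:1.2}, so the resulting circle-centered arrangement is MBC. The two MBCC assertions are then essentially formal: a circle-centered arrangement is MBCC because it is built by the prescribed procedure, and if the given MBC arrangement already carries such a construction sequence, appending the single step of \eqref{thm:1.2} produces a construction sequence for $(\mathcal{S}',D_{\mathcal{S}'})$, witnessing that it too is MBCC.

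I expect the main obstacle to be the simultaneous local bookkeeping near $p$: confirming that $D_{\mathcal{S}'}$ is connected, that it is genuinely a component carrying the new arc on its boundary, and that the two new corner points are transversal intersections lying in $\overline{D_{\mathcal{S}'}}$ which are neither poles nor triple points. The cleanest way to control the finitely many ``bad'' radii is the continuity/convergence argument above, by which every coincidence to be excluded (with a pole, with another circle, or a tangency) is ruled out for all sufficiently small $r_{j'}$.
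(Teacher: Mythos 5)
Your overall strategy---verifying the two conditions of Definition \ref{def:2} directly for $(\mathcal{S}',D_{\mathcal{S}'})$ and deducing part (\ref{thm:1.1}) by induction along the construction of Definition \ref{def:3}---is reasonable and more direct than the paper's, which obtains Theorem \ref{thm:1} only as a byproduct of the exhaustive local case analysis (FIGURE \ref{fig:2}) used to prove Theorems \ref{thm:2} and \ref{thm:3}. However, there is a genuine gap: you restrict $p$ to lie ``away from the existing intersection points'' and call this the harmless generic choice. The theorem does not permit that restriction. The chosen point $p\in S_{x_j,r_j}\bigcap\overline{D_{\mathcal{S}}}$ is allowed to be a point contained in exactly two circles of $\mathcal{S}$, and this is not a negligible degenerate case: Theorem \ref{thm:3} is devoted entirely to it, and Definition \ref{def:3} permits the center of the new circle to be placed there. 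In that configuration the new circle meets two circles of $\mathcal{S}$ rather than one, the local model of $\overline{D_{\mathcal{S}}}$ at $p$ is an angular sector rather than a half-plane, and the checklist changes: one must verify that exactly one intersection point of $S_{x_{j'},r_{j'}}$ with each of $S_{x_{j_1},r_{j_1}}$ and $S_{x_{j_2},r_{j_2}}$ survives in $\overline{D_{\mathcal{S}'}}$, that these are transversal, non-triple and non-polar, and that condition (1) of Definition \ref{def:2} still holds for both old circles after the corner at $p$ is excised. None of your local bookkeeping addresses this case, so as written the proof establishes only a weaker statement.

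A second, smaller defect: your continuity argument excludes coincidences of the new intersection points with the poles of the circles already in $\mathcal{S}$ (finitely many fixed points), but condition (2) also forbids them from being poles of the new circle $S_{x_{j'},r_{j'}}$ itself, and those poles converge to $p$ at the same rate as the intersection points, so convergence alone decides nothing. When $p$ is, say, a vertical pole of $S_{x_j,r_j}$, the directions from $p$ to the two new intersection points tend precisely to the horizontal, i.e.\ to the directions of the horizontal poles of the new circle; ruling out the coincidence requires an explicit computation (or an argument using the curvature of $S_{x_j,r_j}$), not just the limit. This is fixable, but it is exactly one of the coincidences your ``continuity/convergence'' paragraph claims to dispose of and does not.
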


We present most important part of our result. Remember notation on Poincar\'e-Reeb graphs (V-digraphs) in our "Subsection \ref{subsec:2.2}". Hereafter, as a kind of elementary operations, we add vertices in the interiors of edges for example. In such procedures, we preserve the originally given orientations of the edges and preserve real numbers used for labeling vertices for V-digraphs, for example, unless otherwise stated. For example, in Theorem \ref{thm:2} (\ref{thm:2.2}), we need to change  real numbers for labeling vertices slightly. 
\begin{Thm}
	\label{thm:2}
	Consider an operation of Theorem \ref{thm:1}, creating a new pair $({\mathcal{S^{\prime}}},D_{\mathcal{S^{\prime}}})$ of a family of circles and a region surrounded by the circles from an MBC arrangement $(\mathcal{S}:=\{S_{x_{j},r_{j}}\},D_{\mathcal{S}})$. By this operation, a Poincar\'e-Reeb V-digraph $W_{D_{\mathcal{S^{\prime}}},i}$ of the new region $D_{\mathcal{S^{\prime}}}$ is changed from the Poincar\'e-Reeb V-digraph $W_{D_{\mathcal{S}},i}$ of the original one $D_{\mathcal{S}}$ as follows{\rm :} we concentrate on the case $i=1$ here and we can have the similar result for $i=2$ by the symmetry or Proposition \ref{prop:2} {\rm (}Proposition \ref{prop:2} {\rm (}\ref{prop:2.3}{\rm )}{\rm )}.

\begin{enumerate}
\item \label{thm:2.1}  Let $p=x_{j^{\prime}}$ be a point which is contained in the 
unique circle $S_{x_{j},r_{j}}$ from $\mathcal{S}$ and which is not a vertical pole or a horizontal pole.
In this case, we can argue in either of the following ways.
\begin{enumerate}
\item \label{thm:2.1.1} We choose a suitable oriented edge $e_0$ of the V-digraph $W_{D_{\mathcal{S}},1}$ and we add two distinct vertices $v_1$ and $v_2$ in its interior and another new oriented edge $e$ connecting $v_1$ and another new vertex $v$. Furthermore, we choose these vertices and edges so that the edge connecting $v_1$ and $v_2$ and contained as a subset in $e_0$ and $e$ are both departing from $v_1$ or both entering $v_1$. 
In addition, we choose them in such a way that the resulting values $m_{D_{\mathcal{S}},1}(v_1)$ and $m_{D_{\mathcal{S}},1}(v_2)$ are sufficiently close and satisfying the relation  $m_{D_{\mathcal{S}},1}(v_1)<{\pi}_{2,1,1}(p)<m_{D_{\mathcal{S}},1}(v_2)$ or $m_{D_{\mathcal{S}},1}(v_2)<{\pi}_{2,1,1}(p)<m_{D_{\mathcal{S}},1}(v_1)$.
We give the orientation of $e$ by labeling $v$ by a real number $i(v)$ sufficiently close to $m_{D_{\mathcal{S}},1}(v_1)$ with either the relation $m_{D_{\mathcal{S}},1}(v_1)<i(v)<{\pi}_{2,1,1}(p)<m_{D_{\mathcal{S}},1}(v_2)$ or $m_{D_{\mathcal{S}},1}(v_2)<{\pi}_{2,1,1}(p)<i(v)<m_{D_{\mathcal{S}},1}(v_1)$.
The Poincar\'e-Reeb V-digraph $W_{D_{\mathcal{S^{\prime}}},1}$ is isomorphic to the resulting new $V$-digraph. 
\item \label{thm:2.1.2} We choose suitable two adjacent oriented edges $e_{0,1}$ and $e_{0,2}$ of the V-digraph $W_{D_{\mathcal{S}},1}$ such that $e_{0,1}$ departs from a vertex $v_{0,1}$ and enters a vertex $v_0$ and that $e_{0,2}$ departs from $v_0$ and enters a vertex $v_{0,2}$. We add a vertex $v_i$ in the interior of each edge $e_{0,i}$ and another new oriented edge $e$ connecting one of the vertices $v_1$ and $v_2$ and another new vertex $v$. 
Furthermore, we choose these vertices and edges so that $e$ departs from $v_1$ or that $e$ enters $v_2$. In addition, we choose them in such a way that the resulting values $m_{D_{\mathcal{S}},1}(v_1)$ and $m_{D_{\mathcal{S}},1}(v_2)$ are sufficiently close and satisfying the relation $m_{D_{\mathcal{S}},1}(v_1)<{\pi}_{2,1,1}(p)=m_{D_{\mathcal{S}},1}(v_0)<m_{D_{\mathcal{S}},1}(v_2)$. We also label the vertex $v$ by a real number $i(v)$ sufficiently close to $m_{D_{\mathcal{S}},1}(v_1)$ with the relation $m_{D_{\mathcal{S}},1}(v_1)<i(v)<{\pi}_{2,1,1}(p)=m_{D_{\mathcal{S}},1}(v_0)<m_{D_{\mathcal{S}},1}(v_2)$ if $e$ departs from $v_1$. We also label the vertex $v$ by a real number $i(v)$ sufficiently close to $m_{D_{\mathcal{S}},1}(v_2)$ with the relation $m_{D_{\mathcal{S}},1}(v_1)<{\pi}_{2,1,1}(p)=m_{D_{\mathcal{S}},1}(v_0)<i(v)<m_{D_{\mathcal{S}},1}(v_2)$ if $e$ enters $v_2$. 
 The Poincare-Reeb V-digraph $W_{D_{\mathcal{S^{\prime}}},1}$ is isomorphic to the resulting new $V$-digraph.
\end{enumerate}
\item \label{thm:2.2}  Let $p=x_{j^{\prime}}$ be a point which is contained in the unique circle $S_{x_{j},r_{j}}$ from $\mathcal{S}$ and which is a vertical pole. In this case, we also assume that the unique connected component of the intersection $\{(p_1,t) \mid t \in \mathbb{R}\} \bigcap \overline{D_{\mathcal{S}}}$ of the vertical line $\{(p_1,t) \mid t \in \mathbb{R}\}$ and the closure $\overline{D_{\mathcal{S}}}$ containing $p$ does not contain any vertical poles, or points contained in exactly two circles from $\mathcal{S}$ where $p=(p_1,p_2)$. Under the situation, we can argue in either of the following ways. 
\begin{enumerate}
	\item \label{thm:2.2.1} We choose a suitable oriented edge $e_0$ entering {\rm (}departing from{\rm )} a vertex $v_0$ of degree $1$ with $m_{D_{\mathcal{S}},1}(v_0)={\pi}_{2,1,1}(p)$ in the V-digraph $W_{D_{\mathcal{S}},1}$. We add two new vertices $v_1$ and $v_2$ and two new oriented edges $e_j$ departing from {\rm (}resp. entering{\rm )} $v_0$ and entering {\rm (}resp. departing from{\rm )} $v_j$ {\rm (}$j=1,2${\rm )}. 
Moreover, we label the vertices $v_1$ and $v_2$ by a suitable same real number $i(v_1)=i(v_2)$ smaller {\rm (}resp. greater{\rm )} than $m_{D_{\mathcal{S}},1}(v_0)={\pi}_{2,1,1}(p)$ and sufficiently close to $m_{D_{\mathcal{S}},1}(v_0)$. We also label $v_0$ by a real number smaller {\rm (}resp. greater{\rm )} than $i(v_1)=i(v_2)$ and sufficiently close to $i(v_1)=i(v_2)$ instead of the original value $m_{D_{\mathcal{S}},1}(v_0)$.  
	The Poincare-Reeb V-digraph $W_{D_{\mathcal{S^{\prime}}},1}$ is isomorphic to the resulting new $V$-digraph.
	\item \label{thm:2.2.2} We choose a suitable oriented edge $e_0$ entering {\rm (}departing from{\rm )} a vertex $v_0$ of degree $3$ with $m_{D_{\mathcal{S}},1}(v_0)={\pi}_{2,1,1}(p)$ and two oriented edges $e_{0,1}$ and $e_{0,2}$ departing from {\rm (}resp. entering{\rm )} $v_0$ in the V-digraph $W_{D_{\mathcal{S}},1}$.
	 We add a vertex $v_i$ in the interior of each edge $e_{0,i}$. We label the vertices $v_1$ and $v_2$ by a suitable same real number $i(v_1)=i(v_2)$ greater {\rm (}resp. smaller{\rm )} than $m_{D_{\mathcal{S}},1}(v_0)$ and sufficiently close to $m_{D_{\mathcal{S}},1}(v_0)$. We also label $v_0$ by a real number smaller {\rm (}resp. greater{\rm )} than the original value $m_{D_{\mathcal{S}},1}(v_0)$ and sufficiently close to the value instead of $m_{D_{\mathcal{S}},1}(v_0)={\pi}_{2,1,1}(p)$.
	The Poincare-Reeb V-digraph $W_{D_{\mathcal{S^{\prime}}},1}$ is isomorphic to the resulting new $V$-digraph. 
\end{enumerate}
\item \label{thm:2.3}  Let $p=x_{j^{\prime}}$ be a point which is contained in the unique circle $S_{x_{j},r_{j}}$ from $\mathcal{S}$ and which is a horizontal pole. 

In this case, we can argue in either of the following ways.
\begin{enumerate}
\item \label{thm:2.3.1} We choose a suitable edge $e_0$ departing from a vertex $v_{0,1}$ and entering a vertex $v_{0,2}$ in the V-digraph $W_{D_{\mathcal{S}},1}$. For this, the relation $m_{D_{\mathcal{S}},1}(v_{0,1})<{\pi}_{2,1,1}(p)<m_{D_{\mathcal{S}},1}(v_{0,2})$ must hold. We add two distinct vertices $v_1$ adjacent to $v_{0,1}$ and $v_2$ adjacent to $v_{0,2}$ in the interior of $e_0$. 
We label the vertex $v_1$ by a real number $i(v_1)$ greater than $m_{D_{\mathcal{S}},1}(v_{0,1})$. We label the vertex $v_2$ by a real number $i(v_2)$ smaller than $m_{D_{\mathcal{S}},1}(v_{0,2})$. We also label them in such a way that $i(v_1)$ and $i(v_2)$ are sufficiently close with the relation $i(v_1)<{\pi}_{2,1,1}(p)<i(v_2)$.
The V-digraph $W_{D_{\mathcal{S^{\prime}}},1}$ is isomorphic to the resulting new $V$-digraph.
\item \label{thm:2.3.2} We choose a suitable oriented edge $e_0$ departing from a vertex $v_{0,1}$ and entering a vertex $v_{0,2}$ in the V-digraph $W_{D_{\mathcal{S}},1}$. For this, the relation $m_{D_{\mathcal{S}},1}(v_{0,1})<{\pi}_{2,1,1}(p)<m_{D_{\mathcal{S}},1}(v_{0,2})$ must hold. We add two distinct vertices $v_1$ adjacent to $v_{0,1}$ and $v_2$ adjacent to $v_{0,2}$ in its interior. We label the vertex $v_1$ by a real number $i(v_1)$ greater than $m_{D_{\mathcal{S}},1}(v_{0,1})$. We label the vertex $v_2$ by a real number $i(v_2)$ smaller than $m_{D_{\mathcal{S}},1}(v_{0,2})$. We also label them in such a way that $i(v_1)$ and $i(v_2)$ are sufficiently close with the relation $i(v_1)<{\pi}_{2,1,1}(p)<i(v_2)$. For a vertex $v_1$, we add another oriented edge $e_1$ departing from $v_1$ and entering another new vertex $v_3$. For a vertex $v_2$, we add another oriented edge $e_2$ departing from another new vertex $v_4$ and entering $v_2$. We label $v_3$ and $v_4$ by suitable real numbers $i(v_3)$ and $i(v_4)$ satisfying the relation $i(v_1)<i(v_3)<{\pi}_{2,1,1}(p)<i(v_4)<i(v_2)$. 
The V-digraph $W_{D_{\mathcal{S^{\prime}}},1}$ is isomorphic to the resulting new $V$-digraph.
\item \label{thm:2.3.3} 
We choose suitable two adjacent oriented edges $e_{0,1}$ and $e_{0,2}$ in the graph $W_{D_{\mathcal{S}},1}$ such that $e_{0,1}$ departs from a vertex $v_{0,1}$ and enters a vertex $v_0$ with $m_{D_{\mathcal{S}},1}(v_0)={\pi}_{2,1,1}(p)$ and that $e_{0,2}$ departs from $v_0$ and enters $v_{0,2}$ in the V-digraph $W_{D_{\mathcal{S}},1}$. We add a vertex $v_i$ in the interior of the edge $e_{0,i}$ {\rm (}$i=1,2${\rm )}. We label the vertex $v_1$ by a real number $i(v_1)$ greater than $m_{D_{\mathcal{S}},1}(v_{0,1})$ and sufficiently close to $m_{D_{\mathcal{S}},1}(v_{0})={\pi}_{2,1,1}(p)${\rm :} the relation $m_{D_{\mathcal{S}},1}(v_{0})>i(v_1)$ holds. We label the vertex $v_2$ by a real number $i(v_2)$ smaller than $m_{D_{\mathcal{S}},1}(v_{0,2})$ and sufficiently close to $m_{D_{\mathcal{S}},1}(v_{0})={\pi}_{2,1,1}(p)${\rm :} the relation $m_{D_{\mathcal{S}},1}(v_{0})<i(v_2)$ holds.
The V-digraph $W_{D_{\mathcal{S^{\prime}}},1}$ is isomorphic to the resulting new $V$-digraph.
\item \label{thm:2.3.4} We choose suitable two adjacent oriented edges $e_{0,1}$ and $e_{0,2}$ in the V-digraph $W_{D_{\mathcal{S}},1}$ such that $e_{0,1}$ departs from a vertex $v_{0,1}$ and enters a vertex $v_0$ with $m_{D_{\mathcal{S}},1}(v_0)={\pi}_{2,1,1}(p)$ and that $e_{0,2}$ departs from $v_0$ and enters $v_{0,2}$. We add a vertex $v_i$ in the interior of the edge $e_{0,i}$ {\rm (}$i=1,2${\rm )}. We label the vertex $v_1$ by a real number $i(v_1)$ greater than $m_{D_{\mathcal{S}},1}(v_{0,1})$ and sufficiently close to $m_{D_{\mathcal{S}},1}(v_0)={\pi}_{2,1,1}(p)${\rm :} the relation $m_{D_{\mathcal{S}},1}(v_0)>i(v_1)$ holds. We label the vertex $v_2$ by a real number $i(v_2)$ smaller than $m_{D_{\mathcal{S}},1}(v_{0,2})$ and sufficiently close to $m_{D_{\mathcal{S}},1}(v_0)={\pi}_{2,1,1}(p)${\rm :} the relation $m_{D_{\mathcal{S}},1}(v_0)<i(v_2)$ holds.
For a vertex $v_1$, we add another oriented edge $e_1$ departing from $v_1$ and entering another new vertex $v_3$. For a vertex $v_2$, we add another oriented edge $e_2$ departing from another new vertex $v_4$ and entering $v_2$.  We label $v_3$ and $v_4$ by suitable real numbers $i(v_3)$ and $i(v_4)$ satisfying the relation $m_{D_{\mathcal{S}},1}(v_{0,1})<i(v_1)<i(v_3)<m_{D_{\mathcal{S}},1}(v_0)={\pi}_{2,1,1}(p)<i(v_4)<i(v_2)<m_{D_{\mathcal{S}},1}(v_{0,2})$. 
The V-digraph $W_{D_{\mathcal{S^{\prime}}},1}$ is isomorphic to the resulting new $V$-digraph.

\end{enumerate}
\end{enumerate}
\end{Thm}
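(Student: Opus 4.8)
The plan is to reduce the whole statement to a local comparison of the vertical slices of $\overline{D_{\mathcal{S}}}$ and $\overline{D_{\mathcal{S}^{\prime}}}$. First I would discard $i=2$ at once: the rotation by $\frac{\pi}{2}$ of Proposition \ref{prop:2} (\ref{prop:2.3}) interchanges the two coordinate projections, so the $i=2$ assertions are the $i=1$ assertions for the rotated arrangement, and it suffices to treat $\pi_{2,1,1}(x_1,x_2)=x_1$. The essential point is then \emph{localization}: because $r_{j^{\prime}}$ is chosen sufficiently small, the closed disk $\overline{B}$ bounded by $S_{x_{j^{\prime}},r_{j^{\prime}}}$ meets $\overline{D_{\mathcal{S}}}$ only in a small neighborhood $U$ of $p$, and $\overline{D_{\mathcal{S}^{\prime}}}=\overline{D_{\mathcal{S}}}$ off $U$. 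Applying the variant of Ehresmann's theorem used in Remark \ref{rem:1} to $\pi_{2,1,1}$ on the cornered manifolds $\overline{D_{\mathcal{S}}}$ and $\overline{D_{\mathcal{S}^{\prime}}}$, the trivialized segment-bundle structure over any $x_1$-interval disjoint from $\pi_{2,1,1}(U)$ is identical for the two regions. Since $\pi_{2,1,1}(U)$ shrinks to the single value $\pi_{2,1,1}(p)$ as $r_{j^{\prime}}\to 0$, the V-digraphs $W_{D_{\mathcal{S}},1}$ and $W_{D_{\mathcal{S}^{\prime}},1}$ are canonically identified away from an arbitrarily small neighborhood of the fiber over $\pi_{2,1,1}(p)$; everything reduces to reading off the new graph inside that neighborhood.

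Next I would write down the local model. Removing $\overline{B}$ replaces the arc of $S_{x_j,r_j}$ through $p$ by the arc $\gamma$ of $S_{x_{j^{\prime}},r_{j^{\prime}}}$ lying on the $D_{\mathcal{S}}$-side; $\gamma$ meets $S_{x_j,r_j}$ in exactly two points $q_-,q_+$ (new intersections of distinct circles), and $\gamma$ carries exactly one point with vertical tangent line and one with horizontal tangent line. These four points are the only candidates for new pre-vertices, so by Theorem \ref{thm:1} and Proposition \ref{prop:1} the new graph is obtained by grafting them onto $W_{D_{\mathcal{S}},1}$. Comparing the segments $\{x_1=c\}\cap\overline{D_{\mathcal{S}}}$ and $\{x_1=c\}\cap\overline{D_{\mathcal{S}^{\prime}}}$ as $c$ runs through $\pi_{2,1,1}(U)$ decides, for each $c$, whether the slice is merely shortened (connectivity unchanged) or is split or pinched by $\gamma$. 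The resulting bookkeeping is controlled by three independent pieces of local data: the type of $p$ on $S_{x_j,r_j}$ (generic, vertical pole, or horizontal pole), which fixes how $\gamma$ sits relative to the $x_1$-direction; the side of $S_{x_j,r_j}$ on which $D_{\mathcal{S}}$ lies near $p$; and whether $\pi_{2,1,1}(p)$ is interior to an edge of $W_{D_{\mathcal{S}},1}$ or equals a vertex value, and of which degree. These alternatives are exactly the enumerated subcases.

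I would then verify the listed changes one subcase at a time. In the generic case (\ref{thm:2.1}) the points $q_-,q_+$ have distinct $x_1$-values and the $x_1$-extreme of $\gamma$ produces a single new leaf, attached through the pre-vertices $\{v_1,v_2\}$; this is the pendant edge $e$ of (\ref{thm:2.1.1}) when $\pi_{2,1,1}(p)$ is interior to an edge and the straddling picture of (\ref{thm:2.1.2}) when it falls on a vertex. The orientation constraint (that $e$ and the $v_1v_2$-segment both depart from, or both enter, $v_1$) is forced by the fixed $x_1$-order of the extreme relative to $q_\pm$, which I would read off from the sign of the tangent slope at $p$. In the horizontal-pole case (\ref{thm:2.3}) the circle $S_{x_j,r_j}$ itself has vertical tangent at $p$, so $\gamma$ is a half of $S_{x_{j^{\prime}},r_{j^{\prime}}}$ cut off by a vertical chord and the points $q_\pm$ share an $x_1$-value; the four subcases (\ref{thm:2.3.1})--(\ref{thm:2.3.4}) are precisely the two choices of side of $S_{x_j,r_j}$ (controlling whether the bite merely inserts $v_1,v_2$ or in addition spawns the pendant edges $e_1,e_2$ onto new leaves $v_3,v_4$) combined with the edge-interior versus degree-three-vertex alternative. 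In every such subcase the labels $i(v_k)$ are pinned down, up to order, by continuity of $m_{D_{\mathcal{S}},1}$ together with the smallness of $r_{j^{\prime}}$, and the orientations follow from the defining V-digraph inequality that edges increase the $x_1$-value.

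The step I expect to be the main obstacle is the vertical-pole case (\ref{thm:2.2}). Here, and only here, the features produced by $\gamma$ land over the value $\pi_{2,1,1}(p)$ already occupied by the vertex $v_0$: because $p$ is a vertical pole, the horizontal-tangent point of $\gamma$ sits exactly over $\pi_{2,1,1}(p)$. Thus the operation does not insert vertices at fresh values, as in the other cases, but modifies $v_0$ itself, changing its degree and its value-order relative to the two new branch vertices $v_1,v_2$. Since a V-digraph cannot carry two vertices at one value, $m_{D_{\mathcal{S}},1}(v_0)$ must be perturbed slightly off $\pi_{2,1,1}(p)$ and $v_1,v_2$ given a common nearby value on the correct side, exactly the re-labeling announced before the statement. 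Checking that this perturbation is compatible with all incident orientations, that it is realized by the genuine ordering of the nearby slices under the standing assumption that the slice component through $p$ carries no other pre-vertex, and that it reproduces the degree-one picture (\ref{thm:2.2.1}) and the degree-three picture (\ref{thm:2.2.2}) is the delicate part; the remaining subcases are variants of the same slice comparison carried out above.
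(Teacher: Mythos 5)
Your overall architecture is the same as the paper's: reduce $i=2$ to $i=1$ by the rotation of Proposition \ref{prop:2} (\ref{prop:2.3}), localize near $p$ using the smallness of $r_{j^{\prime}}$ so that the two V-digraphs agree away from a small neighborhood of the value ${\pi}_{2,1,1}(p)$, and then read off the local change from the configuration of the new boundary arc $\gamma$ (its intersection points $q_{\pm}$ with $S_{x_{j},r_{j}}$ and the poles of $S_{x_{j^{\prime}},r_{j^{\prime}}}$ lying on $\gamma$) by comparing vertical slices; this is what the paper does via FIGURE \ref{fig:2}, and your treatment of the generic case (\ref{thm:2.1}) is correct in outline. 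The genuine gap is in the two pole cases, which are the heart of the list: the local models you assign to (\ref{thm:2.2}) and (\ref{thm:2.3}) are interchanged relative to the graph changes those items assert. When the tangent line to $S_{x_{j},r_{j}}$ at $p$ is \emph{vertical}, the common chord of the two circles is vertical, so $q_{+}$ and $q_{-}$ share their $x_1$-value and every new pre-vertex lies strictly on one side of ${\pi}_{2,1,1}(p)$; the slice count then yields exactly the pattern of (\ref{thm:2.2.1})/(\ref{thm:2.2.2}) (a common label $i(v_1)=i(v_2)$, all new values on one side, and the relabelling of $v_0$). When the tangent at $p$ is \emph{horizontal}, the chord is horizontal and the new pre-vertices ($q_{\pm}$ and, depending on which side of $S_{x_{j},r_{j}}$ the region lies, the points of $S_{x_{j^{\prime}},r_{j^{\prime}}}$ with vertical tangent at $x_1=p_1\pm r_{j^{\prime}}$) fall on both sides of ${\pi}_{2,1,1}(p)$, producing $i(v_1)<{\pi}_{2,1,1}(p)<i(v_2)$ as in (\ref{thm:2.3.1})--(\ref{thm:2.3.4}). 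You attribute the vertical-chord picture to (\ref{thm:2.3}) and the horizontal-tangent picture to (\ref{thm:2.2}); with those models the slice comparison cannot reproduce the stated conclusions (for instance it can never give $i(v_1)<{\pi}_{2,1,1}(p)<i(v_2)$ in (\ref{thm:2.3})). The confusion is partly invited by the paper, since Definition \ref{def:1} names the poles by their displacement from the center while the pre-vertex convention and Theorem \ref{thm:2} use them according to the tangent direction; but a proof has to match the geometry to the asserted lists, and yours does not in these two cases.

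A second, related gap: you flag (\ref{thm:2.2}) as the delicate case and then do not actually carry it out. The mechanism you offer for relabelling $v_0$ --- that a V-digraph cannot carry two vertices with one value --- is a false premise (item (\ref{thm:2.2.1}) itself imposes $i(v_1)=i(v_2)$), and the point of $\gamma$ with horizontal tangent, which you single out as the feature sitting over ${\pi}_{2,1,1}(p)$, is irrelevant to the $x_1$-projection, whose pre-vertices come only from vertical tangencies and from intersections of distinct circles. The actual reason $v_0$ must be relabelled is that the local extremum of $x_1$ on the boundary moves from $p$ to the pole of $S_{x_{j^{\prime}},r_{j^{\prime}}}$ at $x_1=p_1\mp r_{j^{\prime}}$, while the deaths (resp. births) of the two split slice components occur at the common $x_1$-value of $q_{\pm}$, which lies strictly between that pole and $p_1$; this reversal of order along the $x_1$-axis is what forces the new labels and the degree change at $v_0$. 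Until the slice analysis of (\ref{thm:2.2}) and (\ref{thm:2.3}) is redone with the correct local models, the proof is incomplete.
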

\begin{Thm}
	\label{thm:3}
	We consider an operation of Theorem \ref{thm:1}, creating new pair $({\mathcal{S^{\prime}}},D_{\mathcal{S^{\prime}}})$ of a family of circles and a region surrounded by the circles from an MBC arrangement $(\mathcal{S}:=\{S_{x_{j},r_{j}}\},D_{\mathcal{S}})$, again. We have the following.
	\begin{enumerate}	\setcounter{enumi}{3}
		\item \label{thm:3.1}
	
Let $p=x_{j^{\prime}}$ be a point which is contained in exactly two circles $S_{x_{j_1},r_{j_1}}$ and $S_{x_{j_2},r_{j_2}}$ from $\mathcal{S}$. Note that by the definition of an MBC arrangement, this point is not a horizontal pole or a vertical pole of any circle from $\mathcal{S}$. We have the unique two point set represented as the intersection $\overline{D_{\mathcal{S}}} \bigcap S_{x_{j^{\prime}},r_{j^{\prime}}}$ and the union of two one-point sets as $(S_{x_{j^{\prime}},r_{j^{\prime}}} \bigcap S_{x_{j_1},r_{j_1}}) \sqcup (S_{x_{j^{\prime}},r_{j^{\prime}}} \bigcap S_{x_{j_2},r_{j_2}})$.

 We also consider the straight line tangent to $S_{x_{j_1},r_{j_1}}$ at $p$ and the straight line tangent to $S_{x_{j_2},r_{j_2}}$ at $p$ and consider the intersection with $S_{x_{j^{\prime}},r_{j^{\prime}}}$.

For each straight line corresponding to the circle $S_{x_{j_a},r_{j_a}}$ we have a two-point set $\{p_{j_a,j^{\prime},1},p_{j_a,j^{\prime},2}\}$. 
We can choose the unique point $p_{j_a,j^{\prime},b}$ closer to the set $D_{\mathcal{S}} \bigcap S_{x_{j^{\prime}},r_{j^{\prime}}}$ in $S_{x_{j^{\prime}},r_{j^{\prime}}}$ uniquely for each $a=1,2$.

We can form the angle by the two segments each of which connects $p$ and $p_{j_a,j^{\prime},b}$ {\rm (}$a=1,2${\rm )}. We call the angle a {\rm canonical angle at $(p,\overline{D_{\mathcal{S}}})$}{\rm :} we can give the unique orientations to these two segments from $p$ to the points, represent the oriented segments by the form of vectors
	$(v_{p,\overline{D_{\mathcal{S}}},1,1},v_{p,\overline{D_{\mathcal{S}}},1,2})$ and $(v_{p,\overline{D_{\mathcal{S}}},2,1},v_{p,\overline{D_{\mathcal{S}}},2,2})$ with $v_{p,\overline{D_{\mathcal{S}}},a,a^{\prime}} \neq 0$ {\rm (}$a=1,2,a^{\prime}=1,2${\rm )}, call the two oriented segments {\rm canonical angle segments for the angle}, and we can also define the size of the angle by a positive real number $0<\theta<\pi$ canonically. 
	\item \label{thm:3.2}
	By this operation, the Poincar\'e-Reeb V-digraph $W_{D_{\mathcal{S^{\prime}}},1}$ of the new region $D_{\mathcal{S^{\prime}}}$ is changed from the corresponding Poinar\'e-Reeb V-digraph $W_{D_{\mathcal{S}},1}$ of the original one $D_{\mathcal{S}}$ as follows{\rm :} we concentrate on the case $i=1$ here and we can have the similar result for $i=2$ by the symmetry or Proposition \ref{prop:2} {\rm (}Proposition \ref{prop:2} {\rm (}\ref{prop:2.3}{\rm )}{\rm )}.
	\begin{enumerate}
	\item \label{thm:3.2.1}  Suppose that for the canonical angle segments $(v_{p,\overline{D_{\mathcal{S}}},1,1},v_{p,\overline{D_{\mathcal{S}}},1,2})$ and $(v_{p,\overline{D_{\mathcal{S}}},2,1},v_{p,\overline{D_{\mathcal{S}}},2,2})$ for the canonical angle at $(p,\overline{D_{\mathcal{S}}})$, the relations $v_{p,\overline{D_{\mathcal{S}}},1,1}v_{p,\overline{D_{\mathcal{S}}},2,1}>0$ and $v_{p,\overline{D_{\mathcal{S}}},1,2}v_{p,\overline{D_{\mathcal{S}}},2,2}>0$ hold.

	In this case, the resulting Poincar\'e-Reeb V-digraph $W_{D_{\mathcal{S^{\prime}}},1}$ is as follows.
	
	First, in the Poincar\'e-Reeb V-digraph $W_{D_{\mathcal{S}},1}$, we choose a suitable edge $e_0$ entering {\rm (}departing from{\rm )} a vertex $v_0$ of degree $1$ and add a vertex $v$ in its interior.
	
We label the vertex $v_0$ by a smaller {\rm (}resp. greater{\rm )} real number $i(v_0)$ being sufficiently close to $m_{D_{\mathcal{S}},1}(v_0)$ instead.
	
	 We also label the vertex $v$ by a real number $i(v)$ smaller {\rm (}resp. greater{\rm )} than $i(v_0)$ and sufficiently close to it. The resulting Poincar\'e-Reeb V-digraph $W_{D_{\mathcal{S^{\prime}}},1}$ is isomorphic to this.

\item \label{thm:3.2.2} For the numbers above, suppose that the relations $v_{p,\overline{D_{\mathcal{S}}},1,1}v_{p,\overline{D_{\mathcal{S}}},2,1}>0$ and $v_{p,\overline{D_{\mathcal{S}}},1,2}v_{p,\overline{D_{\mathcal{S}}},2,2}<0$ hold. In this case, the resulting Poincar\'e-Reeb V-digraph is as either of the following.
\begin{enumerate}
\item \label{thm:3.2.2.1} As presented in Theorem \ref{thm:2} {\rm (}\ref{thm:2.2.1}{\rm )} with the relation $i(v_1)=i(v_2)$ being dropped.
\item \label{thm:3.2.2.2} As presented in Theorem \ref{thm:2} {\rm (}\ref{thm:2.2.1}{\rm )}.
\end{enumerate}

\item \label{thm:3.2.3} For the numbers above, suppose that the relations $v_{p,\overline{D_{\mathcal{S}}},1,1}v_{p,\overline{D_{\mathcal{S}}},2,1}<0$ and $v_{p,\overline{D_{\mathcal{S}}},1,2}v_{p,\overline{D_{\mathcal{S}}},2,2}>0$ hold. In this case, the resulting Poincar\'e-Reeb V-digraph is as presented in Theorem \ref{thm:2} {\rm (}\ref{thm:2.3.1}{\rm )} or Theorem \ref{thm:2} {\rm (}\ref{thm:2.3.3}{\rm )}.

\item \label{thm:3.2.4} For the numbers above, suppose that the relations $v_{p,\overline{D_{\mathcal{S}}},1,1}v_{p,\overline{D_{\mathcal{S}}},2,1}<0$ and $v_{p,\overline{D_{\mathcal{S}}},1,2}v_{p,\overline{D_{\mathcal{S}}},2,2}<0$ hold. In this case, the resulting Poincar\'e-Reeb V-digraph is as presented in Theorem \ref{thm:2} {\rm (}\ref{thm:2.3.1}{\rm )} or Theorem \ref{thm:2} {\rm (}\ref{thm:2.3.3}{\rm )}.

\end{enumerate}
\end{enumerate}
\end{Thm}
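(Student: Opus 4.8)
The plan is to localise the whole question near the double point $p$ and then read each case off an explicit local model. I would first dispose of item (\ref{thm:3.1}), which is a well-definedness statement: because $p$ lies on exactly two circles that meet transversally there (by the MBC condition) and is neither a horizontal nor a vertical pole, the two tangent lines at $p$ are distinct and neither is parallel to a coordinate axis. Hence each tangent line meets the small circle $S_{x_{j^{\prime}},r_{j^{\prime}}}$ (centred at $p$) in two points placed symmetrically about $p$, and exactly one of each pair lies on the side carrying $D_{\mathcal{S}}\cap S_{x_{j^{\prime}},r_{j^{\prime}}}$; this fixes $p_{j_a,j^{\prime},b}$ uniquely, makes all four numbers $v_{p,\overline{D_{\mathcal{S}}},a,a^{\prime}}$ nonzero, and gives a well-defined angle $0<\theta<\pi$ by transversality.

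For item (\ref{thm:3.2}) the engine is a localisation step. For $r_{j^{\prime}}$ small the closed disk bounded by $S_{x_{j^{\prime}},r_{j^{\prime}}}$ meets only the two arcs through $p$ and no other pre-vertex of $\overline{D_{\mathcal{S}}}$, and every newly created pre-vertex---the two corners $S_{x_{j^{\prime}},r_{j^{\prime}}}\cap S_{x_{j_a},r_{j_a}}$ and whichever poles of $S_{x_{j^{\prime}},r_{j^{\prime}}}$ land on $\partial\overline{D_{\mathcal{S}^{\prime}}}$---lies inside it. Outside a slightly larger disk $N$ about $p$ the region and the function $\pi_{2,1,1}$ are untouched, so by Proposition \ref{prop:1} and the Ehresmann trivialisations of Remark \ref{rem:1} the two V-digraphs agree away from the image of $N$, and by Theorem \ref{thm:1} the new pair is again an MBC arrangement; only the finite local modification has to be identified. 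I would then replace each arc by its tangent line at $p$, reducing the local picture to the model ``a sector of opening angle $\theta$ with the disk of radius $r_{j^{\prime}}$ about its apex removed''. This is legitimate because every incidence that matters (transversality of the two corners, and whether a bounding point of $S_{x_{j^{\prime}},r_{j^{\prime}}}$ is a genuine fold of $\pi_{2,1,1}$) is an open first-order condition, stable under the $C^1$-small passage from arc to tangent line.

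In the model I would track the connected components of the vertical fibres $\{x_1=c\}$ as $c$ increases through $\pi_{2,1,1}(p)$, recording each value at which a component is born, dies, splits, merges, or meets a new pre-vertex. The sign of $v_{p,\overline{D_{\mathcal{S}}},1,1}\,v_{p,\overline{D_{\mathcal{S}}},2,1}$ organises the coarse behaviour: positive means the two canonical segments leave $p$ on the same $x_1$-side, so $p$ is a local extremum of $\pi_{2,1,1}$ on $\partial\overline{D_{\mathcal{S}}}$ and a degree-$1$ vertex of $W_{D_{\mathcal{S}},1}$, whereas negative means the fibre through $p$ meets the region on both sides, so $p$ sits in the interior of an edge. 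The sign of $v_{p,\overline{D_{\mathcal{S}}},1,2}\,v_{p,\overline{D_{\mathcal{S}}},2,2}$ then fixes where the sector sits relative to the axes through $p$, hence whether the new bounding arc carries a point of vertical tangent (a fold forcing a merge or split) and which poles of $S_{x_{j^{\prime}},r_{j^{\prime}}}$ become extra pre-vertices. Reading the resulting births, deaths, folds and intermediate pre-vertices off the model gives, in the extremum case, either the subdivision-and-relabelling of (\ref{thm:3.2.1}) or the degree-$1$-to-degree-$3$ move of Theorem \ref{thm:2}(\ref{thm:2.2.1})---with $i(v_1)=i(v_2)$ exactly when the two corners share an $x_1$-coordinate, which separates (\ref{thm:3.2.2.2}) from (\ref{thm:3.2.2.1})---and, in the interior case, the two-vertex subdivision of Theorem \ref{thm:2}(\ref{thm:2.3.1}) or its branched refinement Theorem \ref{thm:2}(\ref{thm:2.3.3}).

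The hardest step will be precisely this final matching. The four sign conditions fix only the coarse type; deciding which listed outcome actually occurs---(\ref{thm:2.3.1}) versus (\ref{thm:2.3.3}), or whether the equality $i(v_1)=i(v_2)$ survives in (\ref{thm:3.2.2})---needs the finer geometry of the canonical angle: the actual directions of the two segments, the comparison of the $x_1$-coordinates of the two corners $S_{x_{j^{\prime}},r_{j^{\prime}}}\cap S_{x_{j_a},r_{j_a}}$, and which poles of $S_{x_{j^{\prime}},r_{j^{\prime}}}$ fall on the bounding arc. I would enumerate the finitely many sub-configurations of these directions, verify that each yields exactly one of the asserted models and no other local type, and secure genericity by perturbing $r_{j^{\prime}}$ within its admissible range so that no two new pre-vertices share a fibre. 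Showing that this enumeration is exhaustive, and that the corner-cutting at a double point genuinely reproduces the moves catalogued at poles in Theorem \ref{thm:2} rather than some new phenomenon, is the delicate part of the argument.
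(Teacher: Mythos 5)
Your proposal is correct and follows essentially the same route as the paper: prove (\ref{thm:3.1}) from transversality plus the non-pole condition on double points, then localise near $p$ and classify the change of the V-digraph by the signs of the components of the two canonical angle segments, matching each case to a move already catalogued in Theorem \ref{thm:2}. The only point worth flagging is that your tangent-line (sector) model cannot by itself decide the curvature-sensitive distinctions --- e.g.\ whether $i(v_1)=i(v_2)$ in case (\ref{thm:3.2.2}), which the paper handles by ``respecting \ldots curvatures of the circles'' --- but you correctly defer exactly these to the final matching step using the actual corner coordinates, so this is a difference of emphasis rather than a gap.
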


\begin{proof}[A proof of Theorems \ref{thm:1}--\ref{thm:3}]
First, we assume several facts and arguments and use them implicitly. Remember Remarks \ref{rem:1} and \ref{rem:2} for example.

We first show Theorem \ref{thm:2}. After that we show Theorem \ref{thm:3} explicitly. We do not assume statements or important arguments related to Theorem \ref{thm:1} of course. 
After the investigation, we have Theorem \ref{thm:1} immediately. 

In fact, Theorem \ref{thm:2} and Theorem \ref{thm:3} are shown by a kind of routine work.

They are shown by local observations of circles around the chosen point $p$ centered at a point in some circle $S_{x_j,r_j} \in \mathcal{S}$. FIGURE \ref{fig:2} shows a complete list of local observations for $p=x_{j^{\prime}} \in S_{x_{j^{\prime}},r_{j^{\prime}}}$. We assume the list. Note also that some are related by transformations
 of Proposition \ref{prop:2}. We also give exposition respecting the transformations.  

\begin{figure}
	\includegraphics[width=80mm,height=80mm]{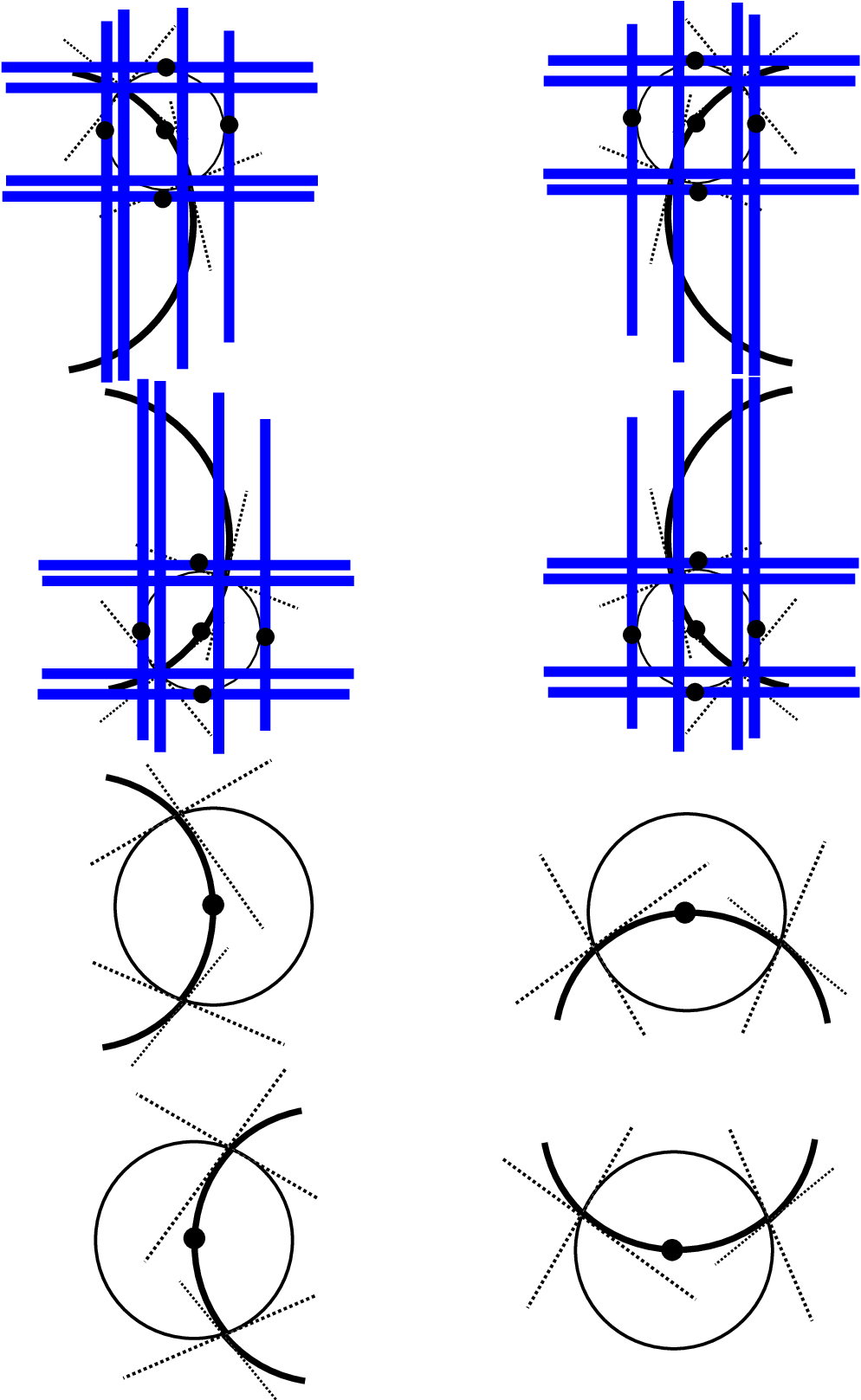}
	\caption{Local observations for $p=x_{j^{\prime}} \in S_{x_{j^{\prime}},r_{j^{\prime}}}$. Blue colored lines show vertical lines and horizontal lines. Dots are for vertical poles, horizontal poles, and the centers of circles. Dotted lines are for straight lines tangent to circles at points contained in exactly two circles $S_{x_{j},r_{j}} \in \mathcal{S}$ and $S_{x_{j^{\prime}},r_{j^{\prime}}} \in \mathcal{S^{\prime}}$.
Furthermore, each of the first four is for a case where $p$ is not a vertical pole or a horizontal pole. In addition, the remaining four is for a case where $p$ is a vertical pole or a horizontal pole: we omit horizontal lines and vertical lines here.}
	\label{fig:2}
\end{figure}

We assume several facts and arguments and use them implicitly. Remember Remarks \ref{rem:1} and \ref{rem:2} for example.

As another important remark, a new circle $S_{x_{j^{\prime}},r_{j^{\prime}}}$ is centered at a point in some existing circle $S_{x_j,r_j} \in \mathcal{S}$ and sufficiently small. The sufficiently small circle yields the property that values at vertices of V-graphs are sufficiently close.

We check Theorem \ref{thm:2}.

The case (\ref{thm:2.1}) is already discussed in \cite{kitazawa7} partially. 

The first case (\ref{thm:2.1.1}) is for the case $p=x_{j^{\prime}}$ is not mapped to any vertex by the quotient map $q_{D_{\mathcal{S}},i}$. See related figures \cite[FIGUREs 1 and 2]{kitazawa7} for example. However, we argue this case again here.

We consider the case where after a suitable operation of Proposition \ref{prop:2} (\ref{prop:2.1}), we have the case $p$ is in the circle centered at the origin $0$ and $p=(p_1,p_2)$ with $p_1,p_2>0$. The intersection of the new small disk bounded by the circle $S_{x_{j^{\prime}},r_{j^{\prime}}}$ and $\overline{D_{\mathcal{S}}}$ is mapped to a closed arc contained in the interior of an edge. By considering the location of the circles, we can change our circles, region and Poincar\'e-Reeb graph locally as presented in the statement. For local structures of our Poincar\'e-Reeb V-digraphs of the present case, we can also refer to also FIGURE \ref{fig:2}, presented later. 
Note that different from the present case, the point $p=x_{j^{\prime}}$ is mapped to a vertex by the quotient map $q_{D_{\mathcal{S}},i}$ in FIGURE \ref{fig:2} and that FIGURE \ref{fig:2} is for the case (\ref{thm:2.1.2}).

We can consider other cases for the signs of $p_1$ and $p_2$ in $p$, by the symmetry. More precisely, in our paper, we consider operations of Proposition \ref{prop:2} (\ref{prop:2.2}). As another way of the argument, we can also respect FIGURE \ref{fig:2} for the other cases. This completes our proof of the case (\ref{thm:2.1.1}).

Hereafter, we discuss cases not being discussed in \cite{kitazawa7}, only. FIGUREs are used for explicit cases mainly. 

We explain the second case (\ref{thm:2.1.2}). We can consider the case $p=x_{j^{\prime}}$ is mapped to a vertex $v_p$ by the quotient map $q_{D_{\mathcal{S}},i}$. For example, FIGURE \ref{fig:3} explicitly shows the case by an explicit global example.
\begin{figure}
\includegraphics[width=80mm,height=50mm]{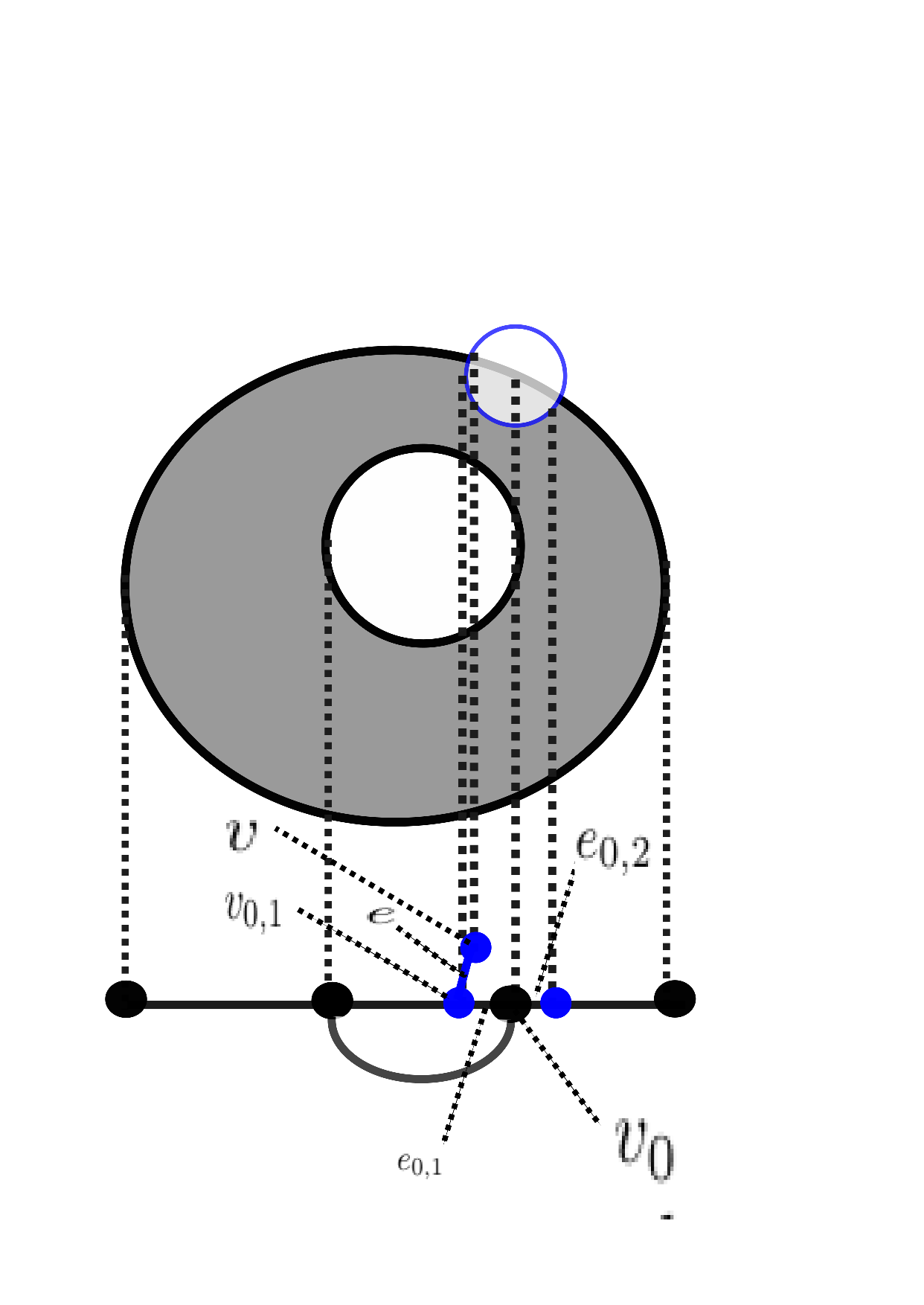}
\caption{A case for the case (\ref{thm:2.1.2}). The blue circle shows the circle $S_{x_{j^{\prime}},r_{j^{\prime}}}$ and the change of our graph is depicted in blue. Hereafter, we use colored objects in this way. Vertices and edges are shown by the notation with dotted segments.}
\label{fig:3}
\end{figure}
 We argue in a general way.
The intersection of the new small disk bounded by the circle $S_{x_{j^{\prime}},r_{j^{\prime}}}$ and $\overline{D_{\mathcal{S}}}$ is mapped to a closed arc containing exactly one vertex so that the vertex is $v_p$ and in the interior of the closed arc. 
Furthermore, the image, represented as the closed arc, is oriented consistently and naturally from the existing oriented edges. 
By considering the location of the circles, we can change similarly except the vertex $v_p$.

We discuss the case (\ref{thm:2.2}).

We discuss the case (\ref{thm:2.2.1}). This case explains the explicit case of FIGURE \ref{fig:4} where the example depicts globally. By considering the location of the circles, we can change our circles, region and Poincar\'e-Reeb graph locally as presented in the statement.
\begin{figure}
\includegraphics[width=80mm,height=50mm]{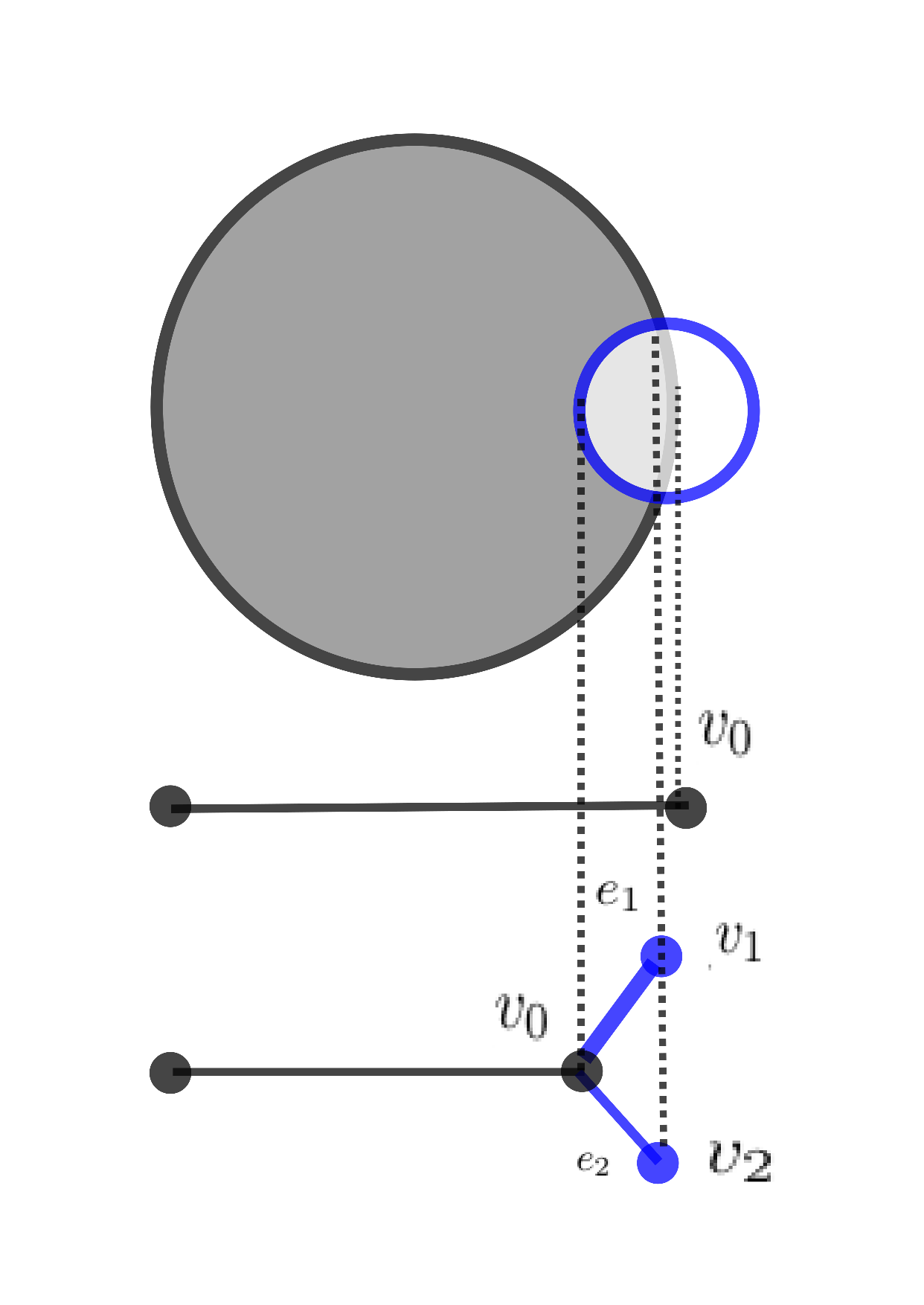}
\caption{The case (\ref{thm:2.2.1}): vertices and edges are shown by the notation.}
\label{fig:4}
\end{figure}

We discuss the case (\ref{thm:2.2.2}). This case explains the explicit case of FIGURE \ref{fig:5} where the example depicts globally. By considering the location of the circles, we can change our circles, region and Poincar\'e-Reeb graph locally as presented in the statement.
\begin{figure}
\includegraphics[width=80mm,height=50mm]{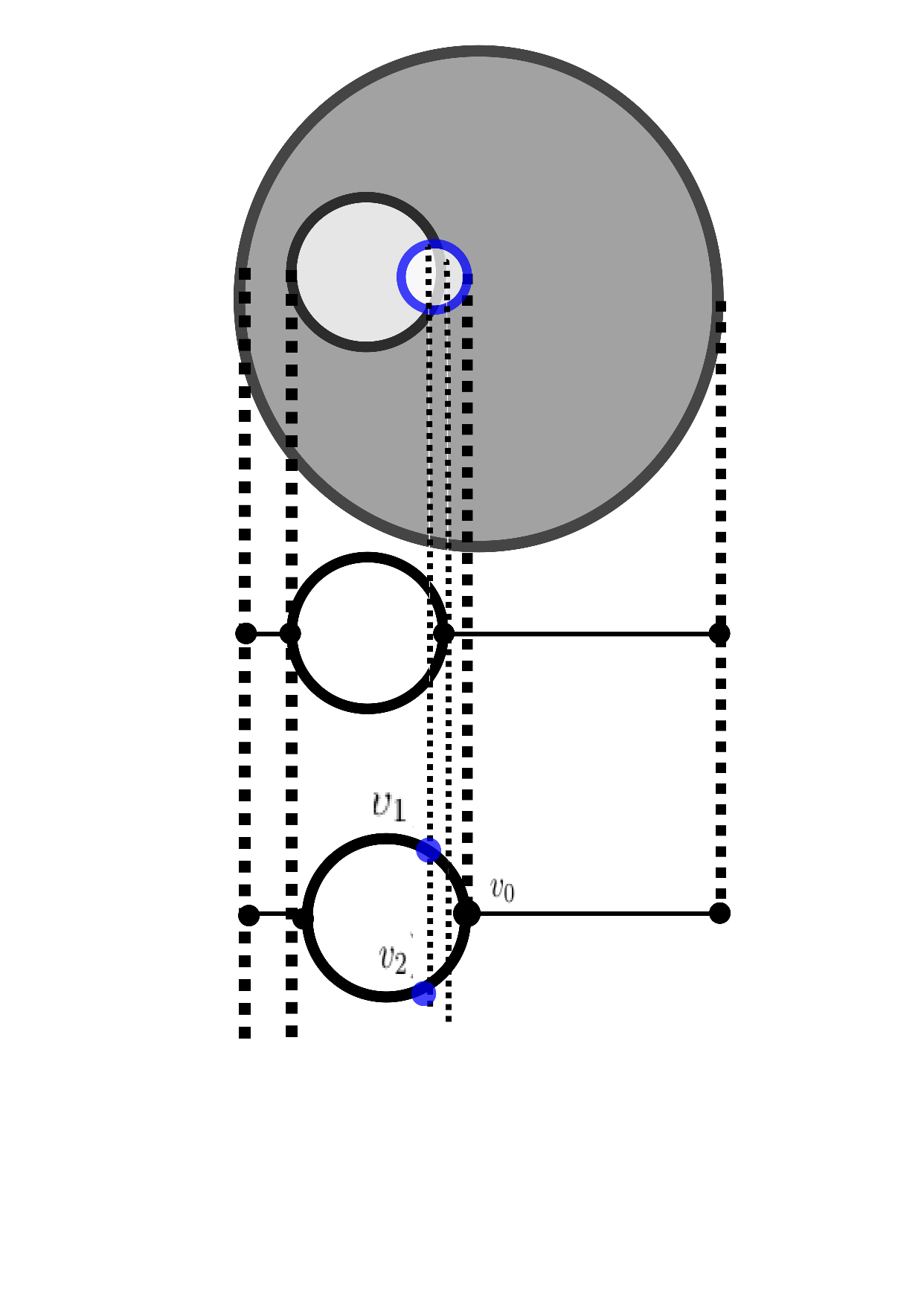}
\caption{The case (\ref{thm:2.2.2}): vertices and edges are shown by the notation.}
\label{fig:5}
\end{figure}


We discuss the case (\ref{thm:2.3}).

The cases (\ref{thm:2.3.1}) and (\ref{thm:2.3.2}) are for the case $p=x_{j^{\prime}}$ is not mapped to any vertex by the quotient map $q_{D_{\mathcal{S}}.1}$. 
For related explicit cases, see FIGURE \ref{fig:6} where these cases show simplest global examples.
We consider the case where after a suitable operation of Proposition \ref{prop:2} (\ref{prop:2.1}), we have the case $p$ is in the circle centered at the origin $0$ and $p=(0,p_2)$ with $p_1=0$ and $p_2>0$ or $p_2<0$. The set $D_S$ may be located above $p$ or beyond $p$. In FIGURE \ref{fig:6}, the first case gives a case of (\ref{thm:2.3.1}) with $p_2<0$ and the second case gives a case of (\ref{thm:2.3.2}) with $p_2>0$.
By considering the location of the circles, we can change our circles, region and Poincar\'e-Reeb graph locally as presented in the statement. 
 This completes our proof of the cases (\ref{thm:2.3.1}) and \ref{thm:2.3.2}.

The cases (\ref{thm:2.3.3}) and (\ref{thm:2.3.4}) are for the case $p=x_{j^{\prime}}$ is mapped to a vertex $v_p$ by the quotient map $q_{D_{\mathcal{S}},1}$. As we do in the case of (\ref{thm:2.1}), or more precisely, (\ref{thm:2.1.2}), we can change similarly except the vertex $v_p$.

\begin{figure}
\includegraphics[width=80mm,height=80mm]{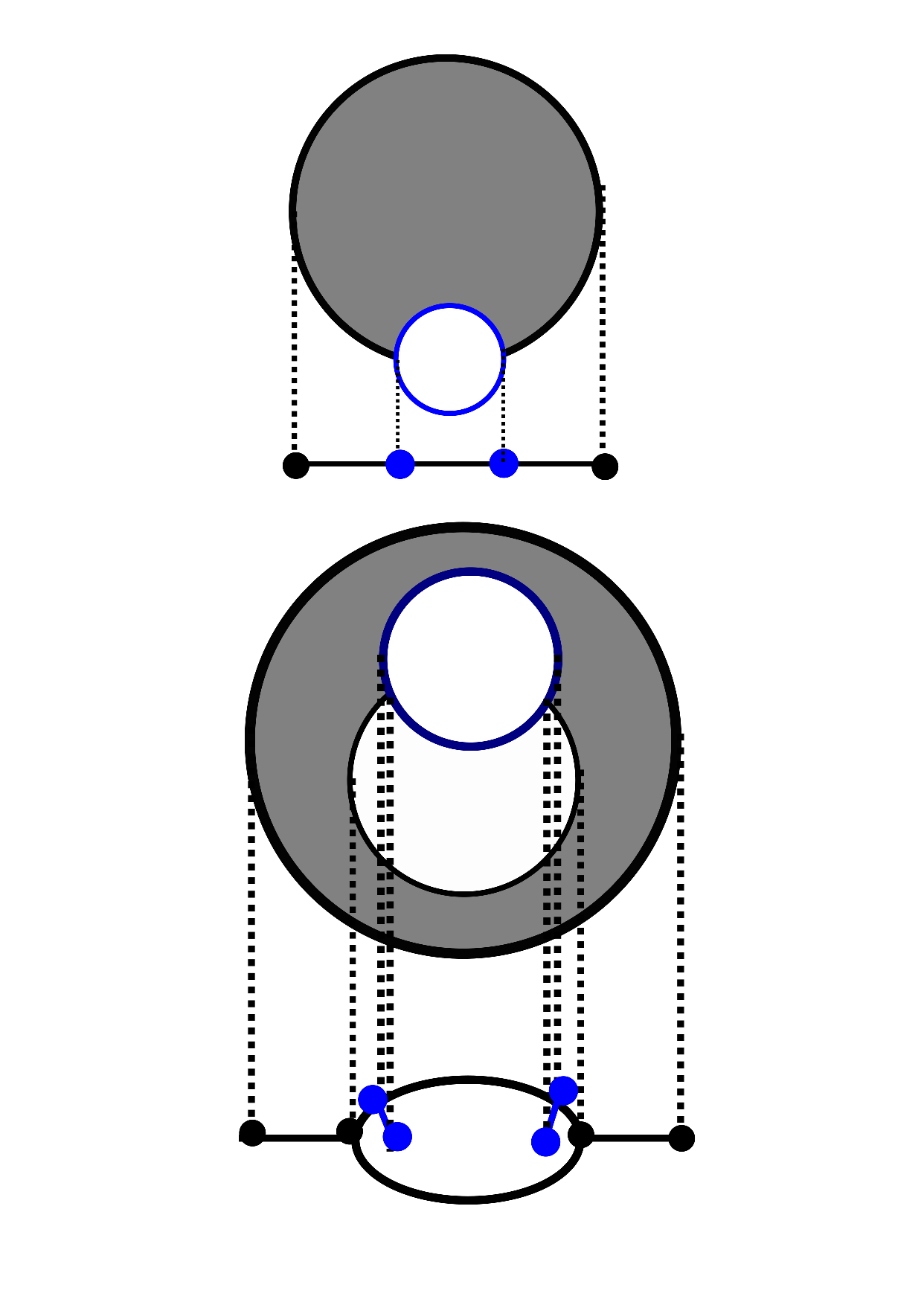}
\caption{Explicit cases for the case (\ref{thm:2.3}). We omit the notation on vertices and edges.}
\label{fig:6}
\end{figure}

This completes our proof of Theorem \ref{thm:2}.

We show Theorem \ref{thm:3} (\ref{thm:3.1}).
The straight lines tangent to the circles $S_{x_{j_1},r_{j_1}}$ and $S_{x_{j_2},r_{j_2}}$ at $p$ are not parallel to vertical or horizontal lines. They intersect in $p$ and form an angle the size of which is $0<\theta<\pi$.
This is thanks to the following and our assumptions and definitions.
\begin{itemize}
	\item Each point contained in exactly two circles $S_{j_1}$ and $S_{j_2}$ from $\mathcal{S}$ is not a vertical point or a horizontal point. 
	\item These circles $S_{j_1}$ and $S_{j_2}$ intersect in such a way that the sum of their tangent vector spaces there coincides with the tangent vector space of ${\mathbb{R}}^2$ there.

\end{itemize}
	The set $D_{\mathcal{S}}$ is represented as an intersection of the form ${\bigcap}_{S_{x_j,r_j} \in \mathcal{S}} \{x \mid f_j(x)>0\}$ where $S_{x_j,r_j}$ is the zero set of a suitably chosen polynomial $f_j$ of degree $2$. We can choose the unique point $p_{j_a,j^{\prime},b}$ closer to the intersection $D_{\mathcal{S}}\bigcap S_{x_{j^{\prime}},r_{j^{\prime}}}$ or contained there as a more specific case.

	For the understanding, we can also consider a straight line $L_{p,D_{\mathcal{S}}}$ being different from the two lines tangent to the circles at $p$, passing through $p$, and locating the set represented as the intersection of the closed disk bounded by $S_{x_{j^{\prime}},r_{j^{\prime}}}$ and $\overline{D_{\mathcal{S}}}-\{p\}$ as a subset of one of the connected components of ${\mathbb{R}}^2-L_{p,D_{\mathcal{S}}}$.
	By the assumption that the two straight lines tangent to the circles at $p$ are not parallel to horizontal lines or vertical lines, we can have desired vectors $(v_{p,\overline{D_{\mathcal{S}}},1,1},v_{p,\overline{D_{\mathcal{S}}},1,2})$ and $(v_{p,\overline{D_{\mathcal{S}}},2,1},v_{p,\overline{D_{\mathcal{S}}},2,2})$ with $v_{p,\overline{D_{\mathcal{S}}},a,a^{\prime}} \neq 0$ where $a=1,2$ and $a^{\prime}=1,2$. They can be also represented as the differences $p_{j_a,j^{\prime},b}-p$ or generally $t(p_{j_a,j^{\prime},b}-p)$ ($t>0$) where the points are seen as $2$-dimensional vectors canonically. 
	
	See also FIGURE \ref{fig:7}.
	
	 \begin{figure}
		\includegraphics[width=80mm,height=80mm]{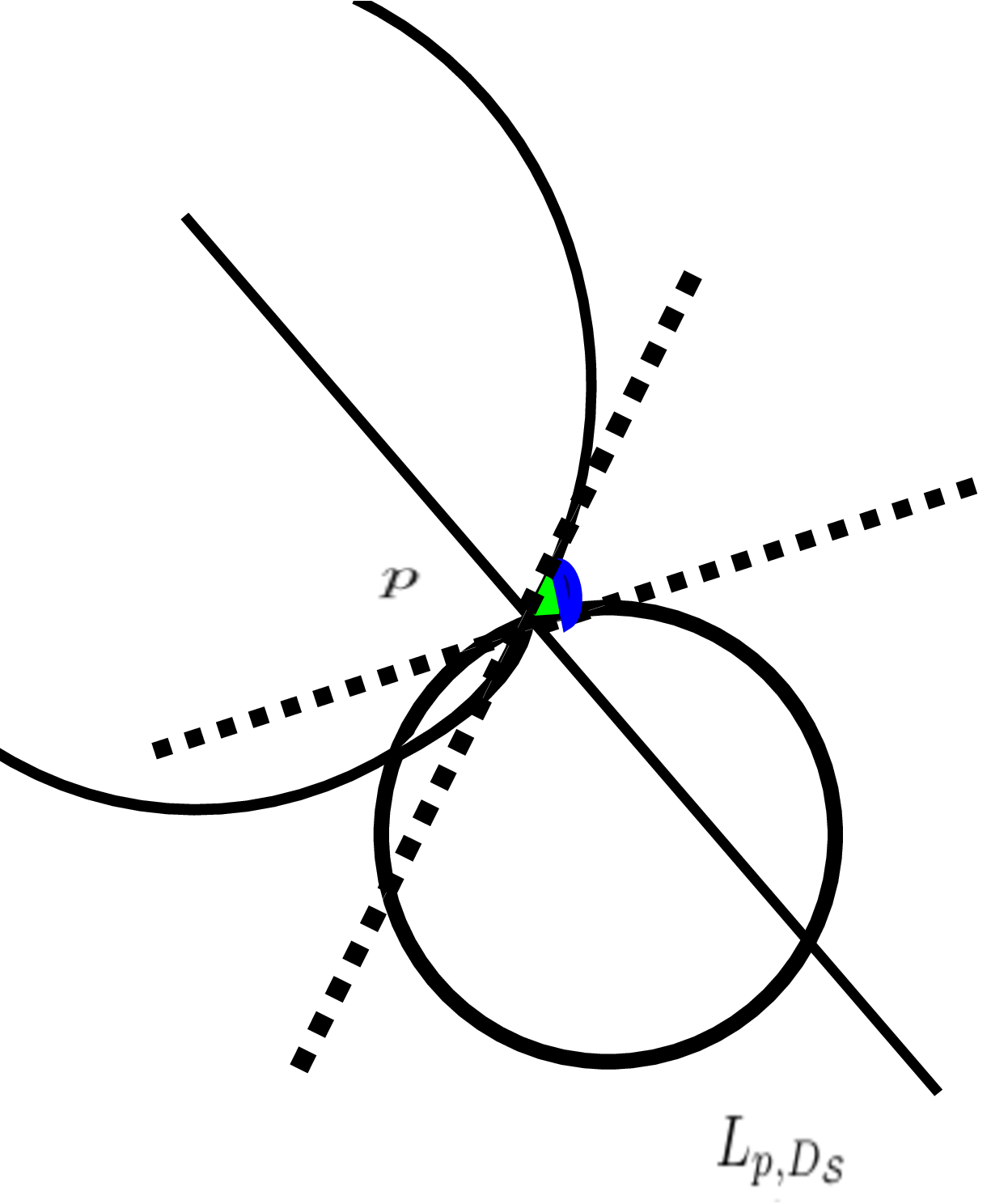}
		\caption{A local picture around a point $p \in S_{x_{j_1},r_{j_1}} \bigcap S_{x_{j_2},r_{j_2}}$ (for the case (\ref{thm:3.2.1})). The set of all points of $D_{\mathcal{S}}$ which are also contained in the interior of the closed disk bounding the new circle $S_{X_{j^{\prime}},r_{j^{\prime}}}$, depicted partially as an arc colored in blue, is colored in green. Dotted lines are for the straight lines tangent to the two circles at $p$.}
		\label{fig:7}
	\end{figure}
	
	We show Theorem \ref{thm:3} (\ref{thm:3.2}). We can check (\ref{thm:3.2.1}) by the locations of circles. For an explicit case, see FIGURE \ref{fig:7} again. The remaining cases are considered based on whether the signs of the $i$-th components of the two canonical angle segments for the canonical angle at $(p,\overline{D_{\mathcal{S}}})$ are same or not ($i=1,2$). For these remaining cases, we can check simply by the locations of circles. More precisely, by respecting FIGURE \ref{fig:2} and curvatures of the circles for example, we can reach our new result.

	This completes the proof of Theorem \ref{thm:3}.
	
	Except more general cases for Theorem \ref{thm:2} (\ref{thm:2.2}), we have considered and investigated all possible cases for the operation of creating $({\mathcal{S}}^{\prime},D_{\mathcal{S}^{\prime}})$ from $(\mathcal{S},D_{\mathcal{S}})$.
	
	In our investigation, we can also check Theorem \ref{thm:1} easily. Note that we do not assume statements or related important arguments of Theorem \ref{thm:1} in our proof of Theorems \ref{thm:2} and \ref{thm:3}.
	
	This completes the proof.
\end{proof}
The following is a kind of additional small comments.
\begin{Thm}
\label{thm:4}
Consider a procedure of constructing MBCC arrangements one after another as in Definition \ref{def:3} or Theorems \ref{thm:1}--\ref{thm:3}.

In Theorem \ref{thm:3}, either our local change presented in {\rm (}\ref{thm:3.2.1}{\rm )}, one presented in {\rm (}\ref{thm:3.2.2.2}{\rm )}, or one presented in {\rm (}\ref{thm:3.2.4}{\rm )} does not occur.

\end{Thm}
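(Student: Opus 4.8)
The plan is to reduce the whole statement to a local analysis at the point $p$, using the rigidity that Definition \ref{def:3} forces on any two circles of an MBCC arrangement that actually meet. The first step is structural. In an MBCC arrangement the circles of ${\mathcal{S}}_0$ are mutually disjoint and each later circle is a \emph{sufficiently small} circle centered at a point of an already present circle; I would first check that ``sufficiently small'' may be taken to mean that a new circle meets only the circle (or circles) carrying its center, so that no intersection point arises between two ``satellite'' circles with unrelated centers. It then follows that whenever two circles $S_{x_{j_1},r_{j_1}}$ and $S_{x_{j_2},r_{j_2}}$ of $\mathcal{S}$ meet at $p$, the one added later, say $S_{x_{j_2},r_{j_2}}$, was introduced with $x_{j_2}\in S_{x_{j_1},r_{j_1}}$ and with $r_{j_2}$ much smaller than $r_{j_1}$. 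This center-on-circle relation, together with $r_{j_2}\ll r_{j_1}$, is the only input I will use about the construction, and in particular it forces $r_{j_1}\neq r_{j_2}$.

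Next I would convert this relation into a statement about the two tangent lines at $p$. Writing $\hat n=(p-x_{j_1})/r_{j_1}$ for the unit normal and $\hat t$ for the unit tangent of $S_{x_{j_1},r_{j_1}}$ at $p$, the hypothesis $x_{j_2}\in S_{x_{j_1},r_{j_1}}$ gives, after a short computation, the identity $\hat n\cdot (p-x_{j_2})/r_{j_2}=r_{j_2}/(2r_{j_1})$, so the unit normal of $S_{x_{j_2},r_{j_2}}$ at $p$ lies within $O(r_{j_2}/r_{j_1})$ of $\pm\hat t$ and the two tangent lines are nearly perpendicular. Consequently the canonical angle segment of Theorem \ref{thm:3} (\ref{thm:3.1}) running along $S_{x_{j_1},r_{j_1}}$ points along $\pm\hat t$, while the one running along $S_{x_{j_2},r_{j_2}}$ points along $\mp\hat n$, the overall signs being fixed by which side of $S_{x_{j_1},r_{j_1}}$ carries $D_{\mathcal{S}}$.

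The heart of the argument is then a sign computation. Setting $\hat n=(\cos\alpha,\sin\alpha)$, hence $\hat t=(-\sin\alpha,\cos\alpha)$, I would expand the two products governing Theorem \ref{thm:3} (\ref{thm:3.2}); up to an error $O(r_{j_2}/r_{j_1})$ they equal $\mp c\,\sin\alpha\cos\alpha$ and $\pm c\,\sin\alpha\cos\alpha$ with $|c|\approx 1$, and in fact their sum is exactly $O(r_{j_2}/r_{j_1})$. Because Definition \ref{def:2} forbids tangent lines at $p$ parallel to an axis, $\sin\alpha\cos\alpha$ is nonzero, and the smallness of the added circles makes the correction negligible, so the products $v_{p,\overline{D_{\mathcal{S}}},1,1}v_{p,\overline{D_{\mathcal{S}}},2,1}$ and $v_{p,\overline{D_{\mathcal{S}}},1,2}v_{p,\overline{D_{\mathcal{S}}},2,2}$ have strictly opposite signs. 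This rules out both the case in which the two products are positive and the case in which they are negative, i.e. it excludes the local change of Theorem \ref{thm:3} (\ref{thm:3.2.1}) and that of Theorem \ref{thm:3} (\ref{thm:3.2.4}).

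Finally I would exclude the symmetric form (\ref{thm:3.2.2.2}). The equality $i(v_1)=i(v_2)$ that distinguishes it from (\ref{thm:3.2.2.1}) is inherited from the genuine vertical-pole case of Theorem \ref{thm:2} (\ref{thm:2.2.1}), where it expresses the reflection symmetry of a single circle about the vertical line through its pole; at a crossing of two \emph{distinct} circles that symmetry is absent. Concretely, the two new vertices $v_1,v_2$ lie on $S_{x_{j_1},r_{j_1}}$ and $S_{x_{j_2},r_{j_2}}$, and comparing the heights at which the new circle of radius $r_{j^{\prime}}$ meets these two arcs, the first-order increments are $\pm r_{j^{\prime}}\cos\alpha$ and $\mp r_{j^{\prime}}\sin\alpha$; these agree only when $\alpha=\pm\pi/4$, and in that borderline direction the second-order terms, which carry the curvatures $1/r_{j_1}$ and $1/r_{j_2}$, differ because $r_{j_1}\neq r_{j_2}$. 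Thus $i(v_1)\neq i(v_2)$ for a sufficiently small new circle, and (\ref{thm:3.2.2.2}) does not occur. The main obstacle I anticipate is precisely this last step: the first-order comparison settles every direction except $\alpha=\pm\pi/4$, and disposing of that borderline rigorously requires the quantitative second-order (curvature) estimate, whereas the exclusion of (\ref{thm:3.2.1}) and (\ref{thm:3.2.4}) is comparatively routine once the center-on-circle normalization is in place.
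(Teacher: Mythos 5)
Your overall strategy is the same as the paper's: reduce everything to the local geometry at $p$, using the rigidity that in an MBCC arrangement any two intersecting circles stand in a center-on-circle relation with one radius much smaller than the other, so that the two tangent lines at $p$ are nearly perpendicular (this is precisely what the paper reads off from FIGURE \ref{fig:2} to kill (\ref{thm:3.2.1}) and (\ref{thm:3.2.4})) and so that $r_{j_1}\neq r_{j_2}$ (this is precisely what the paper's one-line contradiction for (\ref{thm:3.2.2.2}) rests on). Your identity $\hat n\cdot(p-x_{j_2})/r_{j_2}=r_{j_2}/(2r_{j_1})$ is correct, and your treatment of (\ref{thm:3.2.2.2}) is a welcome quantitative version of the paper's appeal to ``elementary geometry forces equal radii'': since $r_{j^{\prime}}$ is chosen only after the tangent directions at $p$ are fixed, your two-case split (generic direction, where the first-order increments already differ, versus the diagonal direction, where the curvature terms $1/r_{j_1}\neq 1/r_{j_2}$ separate the labels) is exhaustive, and each case closes for $r_{j^{\prime}}$ small enough.

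The one step that would fail as written is the exclusion of (\ref{thm:3.2.1}) and (\ref{thm:3.2.4}). You argue that each of the two component products is approximately $\pm\sin\alpha\cos\alpha$ while their sum is $O(r_{j_2}/r_{j_1})$, and conclude opposite signs because ``the smallness of the added circles makes the correction negligible.'' But $r_{j_2}$ was fixed when $S_{x_{j_2},r_{j_2}}$ was added, and Definition \ref{def:2} only forbids $\sin\alpha\cos\alpha=0$ at the intersection point, not $|\sin\alpha\cos\alpha|=O(r_{j_2}/r_{j_1})$. The latter genuinely occurs in the construction: if $x_{j_2}$ is a vertical or horizontal pole of $S_{x_{j_1},r_{j_1}}$ --- exactly the situations of Theorem \ref{thm:2} (\ref{thm:2.2}) and (\ref{thm:2.3}) --- then $p$ lies at angular distance $2\arcsin(r_{j_2}/2r_{j_1})$ from that pole, so $|\sin\alpha\cos\alpha|$ is of the same order as the correction and neither product dominates their sum. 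The conclusion still holds there, but only because the deviation of the tangent line of $S_{x_{j_2},r_{j_2}}$ at $p$ from the normal of $S_{x_{j_1},r_{j_1}}$ is $\arcsin(r_{j_2}/2r_{j_1})$, strictly smaller than the angular offset of $p$ from the pole; establishing the sign pattern in this near-pole regime requires that explicit comparison (or a direct check of those configurations), and ``negligible'' does not supply it. To be fair, the paper's own proof does no better here --- it simply points at FIGURE \ref{fig:2} --- but since you set out to formalize that figure, this is the case your estimate must be sharpened to cover.
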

\begin{proof}
By our proof of Theorems \ref{thm:1}--\ref{thm:3} and discussions on the local changes, 
we cannot encounter the case $(\ref{thm:3.2.1})$ or $(\ref{thm:3.2.4})$. Check FIGURE \ref{fig:2}, presenting local circles and lines around $p=x_{j^{\prime}}$ for example.

We show the non-existence of the case (\ref{thm:3.2.2.2}). If such a case occurs, then by an elementary geometry of circles in the Euclidean plane, some distinct circles $S_{x_1,r_1}$ and $S_{x_2,r_2}$ in our family of circles are of a same radius. They intersect in a two-point set. However, the definition of an MBCC arrangement forces us to choose one of the circle here must be centered at a point in the other circle and of a sufficiently small radius. This is a contradiction.

This completes our proof.
\end{proof}
\subsection{Related examples and remarks.}
\label{subsec:3.3}
\begin{Ex}
\label{ex:2}
This is related to \cite[Main Theorems 4 and 5 and FIGURE 8]{kitazawa7}. We consider the case Example \ref{ex:1} with $(\mathcal{S}=\{S_{x_1,r_1}=S^1\},D_{\mathcal{S}})$. We add a circle as presented in "FIGURE 8 there".
We have an MBC arrangement as presented in "our new FIGURE \ref{fig:8}". This is an MBC arrangement. In short, we choose a new circle $S_{x_2,r_2}$ around a small segment connecting two points in the given circle $S_{x_1,r_1}$ in such a way that the new circle passes through the two points and is centered at a point outside the given circle and that the arc in the new circle is sufficiently close to the small segment above.
We have ${\mathcal{S}}^{\prime}=\{S_{x_1,r_1}, S_{x_2,r_2}\}$. The new region $D_{{\mathcal{S}}^{\prime}}$ is the intersection of $D_{\mathcal{S}}$ and the complementary set of the closed disk bounded by $S_{x_2,r_2}$.
\begin{figure}
	\includegraphics[width=80mm,height=50mm]{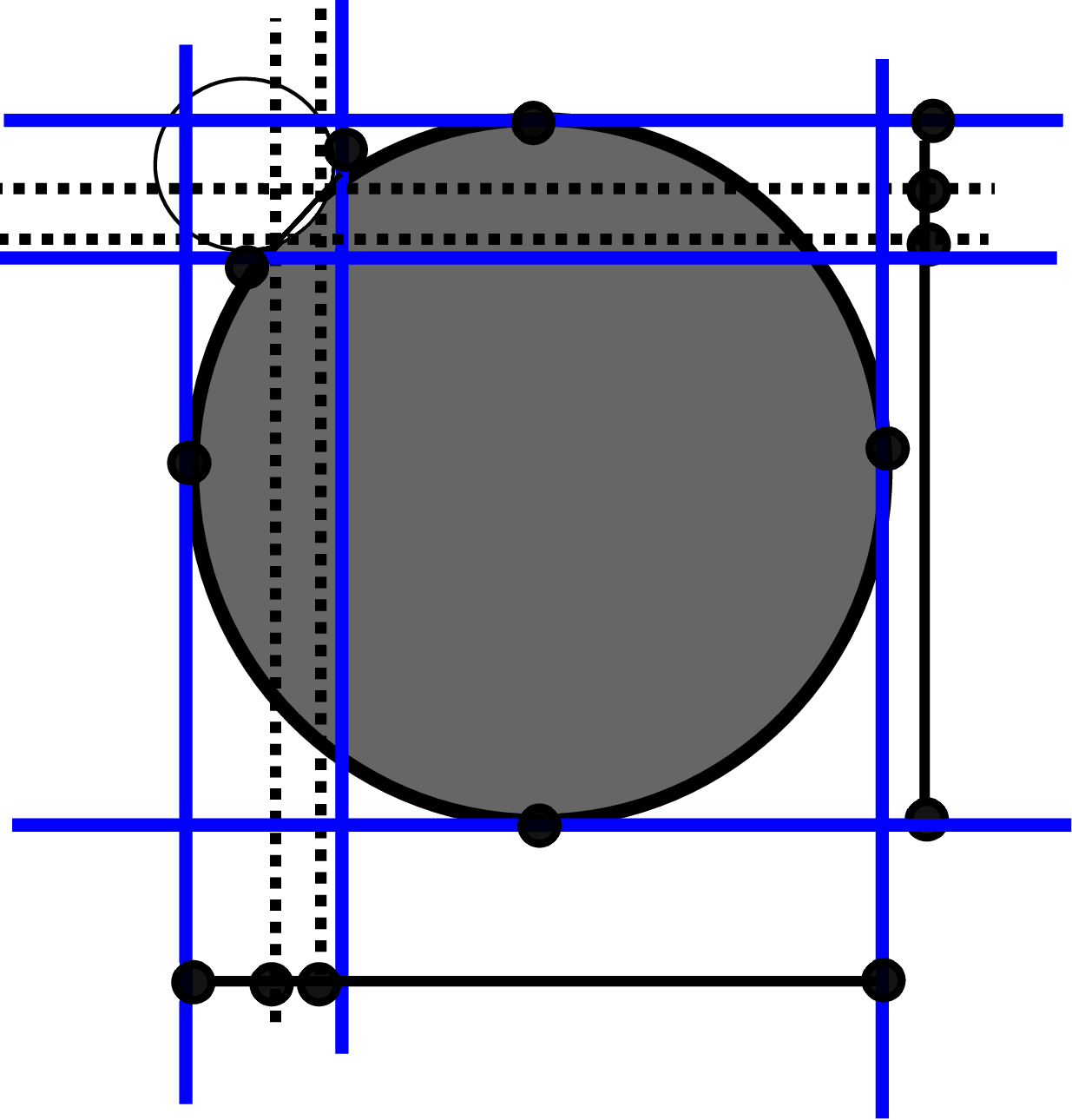}
	\caption{Example \ref{ex:2}. Dots are for horizontal poles and vertical poles of circles and vertices of Poincar\'e-Reeb graphs for example. Blue colored lines show horizontal lines and vertical lines related to this case.}
	\label{fig:8}
\end{figure}

However, this is not an MBCC arrangement. More precisely, we can also check that the Poincar\'e-Reeb graphs of the new bounded connected component $D_{{\mathcal{S}}^{\prime}}$ are graphs with exactly four vertices and homeomorphic to a closed interval. We cannot have such a case of Poincar\'e-Reeb graphs by considering MBCC arrangements only by applying Theorem \ref{thm:2} for example. More explicitly, to change a given Poincar\'e-Reeb graph, homeomorphic to a closed interval, to another Poincar\'e-Reeb graph homeomorphic to a closed interval, we must consider the case of Theorem \ref{thm:2} (\ref{thm:2.3}) (Theorem \ref{thm:2} (\ref{thm:2.3.1}) or (\ref{thm:2.3.3})). However, this forces us to apply the case of Theorem \ref{thm:2} (\ref{thm:2.2}) to the other Poincar\'e-Reeb graph.
\end{Ex}

As an explicit case for generalized cases of Theorem \ref{thm:2} (\ref{thm:2.2}) 
we cannot discuss by Theorem \ref{thm:2} (\ref{thm:2.2}), we present FIGURE \ref{fig:9}.
\begin{figure}
\includegraphics[width=80mm,height=50mm]{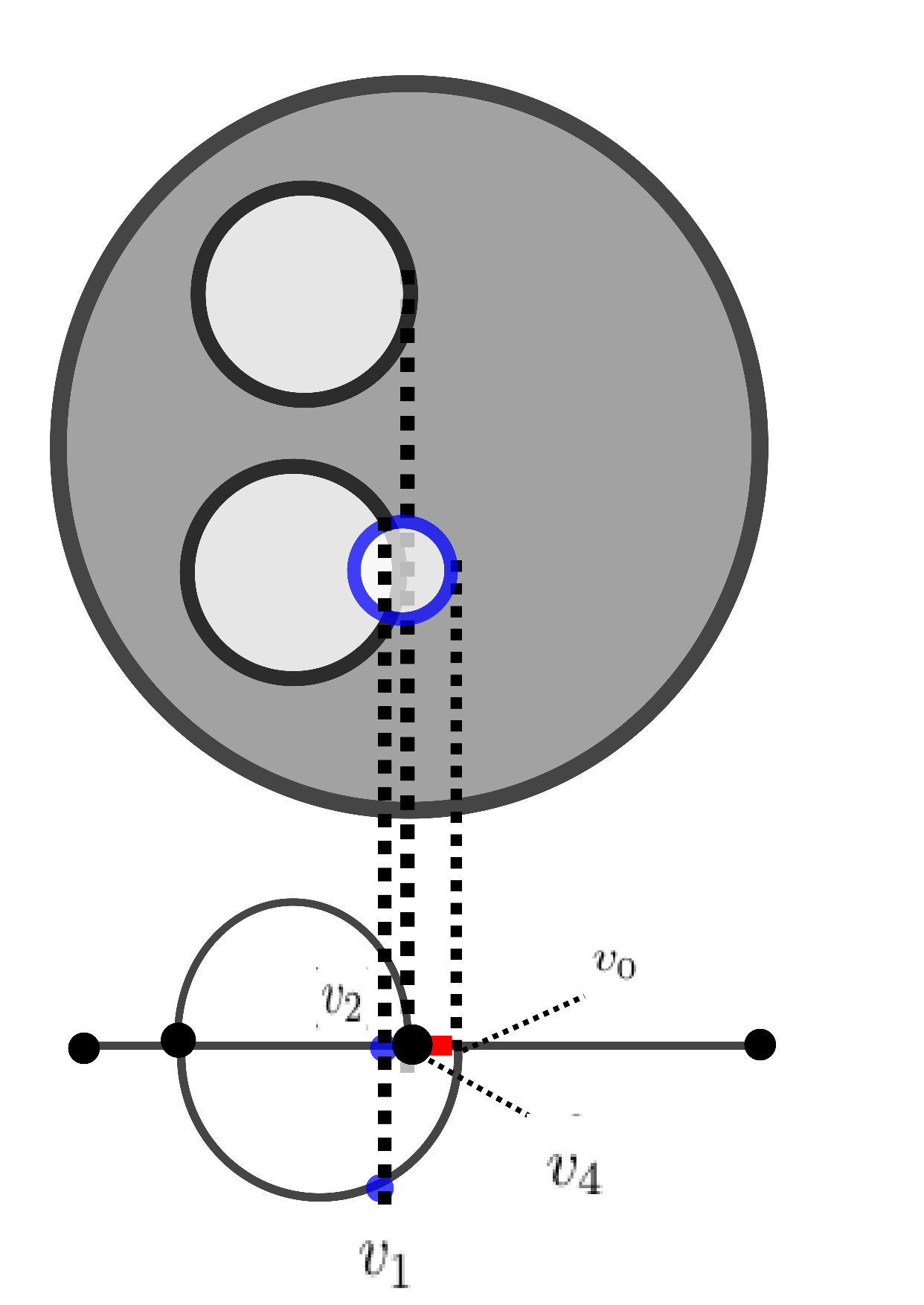}
\caption{A simplest case for general cases of Theorem \ref{thm:2} (\ref{thm:2.2}). By deleting vertices colored in blue and collapsing the edge depicted as a small segment in red, we have a graph isomorphic to our original Poincar\'e-Reeb V-digraph. In Theorem \ref{thm:5}, it is discussed again and we add the notation there to this figure for vertices.}
\label{fig:9}
\end{figure}
\subsection{Additional Comments.}
We present several additional problems and comments.
\begin{Prob}
	In Theorem \ref{thm:2}, can we solve the case (\ref{thm:2.2}) explicitly dropping the additional assumption on the vertical lines and the vertical segments?
\end{Prob}
This is also related to FIGURE \ref{fig:9}.

We discuss this.

In Theorem \ref{thm:2} (\ref{thm:2.2}) with the condition on the vertical lines and the vertical segments being dropped, let $p=(x_{j^{\prime},1}, x_{j^{\prime},2})$ be a vertical pole such that $p=(t,x_{j^{\prime},2})$ is outside the closure $\overline{D_{\mathcal{S}}}$ for $t=x_{j^{\prime},1}-\epsilon$ for any sufficiently small $\epsilon>0$. We call such a vertical pole a vertical pole of {\it I-type}. We call a vertical pole which is not a vertical pole of I-type a vertical pole of {\it II-type}.
 
Let $p=(x_{j^{\prime},1}, x_{j^{\prime},2})$ be a I-type vertical pole.
For a vertical line passing through $p$, consider the intersection with $\overline{D_{\mathcal{S}}}$ and the connected component containing $p$. This is represented by the form $\{(x_{j^{\prime},1},t) \mid x_{j^{\prime},2,{\rm l}} \leq t \leq x_{j^{\prime},2,{\rm u}}\}$ for real numbers satisfying the relation $x_{j^{\prime},2,{\rm l}}<x_{j^{\prime},2}<x_{j^{\prime},2,{\rm u}}$.

We can find an increasing sequence $\{x_{j^{\prime},2,{\rm l},j^{\prime \prime}}\}_{j^{\prime \prime}=1}^a$ ($a \geq 1$) and another increasing sequence $\{x_{j^{\prime},2,{\rm r},j^{\prime \prime}}\}_{j^{\prime \prime}=1}^b$ ($b \geq 1$) satisfying the following conditions.
\begin{enumerate}
\item The relation $x_{j^{\prime},2,{\rm l}}<x_{j^{\prime},2,{\rm l},j^{\prime \prime}} \leq x_{j^{\prime},2,{\rm u}}$ and $x_{j^{\prime},2,{\rm l},a}=x_{j^{\prime},2,{\rm u}}$.
\item The relation $x_{j^{\prime},2,{\rm l}}<x_{j^{\prime},2,{\rm r},j^{\prime \prime}} \leq x_{j^{\prime},2,{\rm u}}$ and $x_{j^{\prime},2,{\rm r},b}=x_{j^{\prime},2,{\rm u}}$.
\item The intersection of these two sequences is the single set consisting of $x_{j^{\prime},2,{\rm u}}$.
\item For some integer $j^{\prime \prime}={j_0}^{\prime \prime} \neq a$, $x_{j^{\prime},2,{\rm l},{j_0}^{\prime \prime}}=x_{j^{\prime},2}$.
\item We can represent the set of all edges entering $v_0:=q_{D_{\mathcal{S}},1}(p)$ by $\{e_{v_0,{\rm l},j^{\prime \prime}}\}_{j^{\prime \prime}=1}^a$. Each set ${q_{D_{\mathcal{S}},1}}^{-1}(e_{v_0,{\rm l},j^{\prime \prime}}) \bigcap {{\pi}_{2,1,1}}^{-1}(x_{j^{\prime},1})$ is the vertical segment $\{(x_{j^{\prime},1},t) \mid x_{j^{\prime},2,{\rm l},j^{\prime \prime}-1} \leq t \leq  x_{j^{\prime},2,{\rm l},j^{\prime \prime}}\}$ where $x_{j^{\prime},2,{\rm l},0}:=x_{j^{\prime},2,{\rm l}}$. The set
$\{(x_{j^{\prime},1},x_{j^{\prime},2,{\rm l},j^{\prime \prime}})\}_{j^{\prime \prime}=1}^{a-1}$
 is the set of all vertical poles of type I in the segment $\{(x_{j^{\prime},1},t) \mid x_{j^{\prime},2,{\rm l}} \leq t \leq x_{j^{\prime},2,{\rm u}}\}$.
\item We can represent the set of all edges departing from $v_0$ by $\{e_{v_0,{\rm r},j^{\prime \prime}}\}_{j^{\prime \prime}=1}^b$. Each set ${q_{D_{\mathcal{S}},1}}^{-1}(e_{v_0,{\rm r},j^{\prime \prime}}) \bigcap {{\pi}_{2,1,1}}^{-1}(x_{j^{\prime},1})$ is the vertical segment $\{(x_{j^{\prime},1},t) \mid x_{j^{\prime},2,{\rm r},j^{\prime \prime}-1} \leq t \leq  x_{j^{\prime},2,{\rm r},j^{\prime \prime}}\}$ where $x_{j^{\prime},2,{\rm r},0}:=x_{j^{\prime},2,{\rm l}}$. The set $\{(x_{j^{\prime},1},x_{j^{\prime},2,{\rm r},j^{\prime \prime}})\}_{j^{\prime \prime}=1}^{b-1}$
 is the set of all vertical poles of type II in the segment $\{(x_{j^{\prime},1},t) \mid x_{j^{\prime},2,{\rm l}} \leq t \leq x_{j^{\prime},2,{\rm u}}\}$. 
 \item Points in the segment $\{(x_{j^{\prime},1},t) \mid x_{j^{\prime},2,{\rm l}} \leq t \leq x_{j^{\prime},2,{\rm u}}\}$ are, except points of the form $(x_{j^{\prime},1},x_{j^{\prime},2,{\rm l},j^{\prime \prime}})$ or $(x_{j^{\prime},1},x_{j^{\prime},2,{\rm r},j^{\prime \prime}})$, not vertical poles, horizontal poles, or points contained in two distinct circles from $\mathbb{S}$.
\end{enumerate}

We label the vertex $v_0:=q_{D_{\mathcal{S}},1}(p)$ by a suitable real number $i(v_0)$ greater than the original value $m_{D_{\mathcal{S}},1}(v_0)$ and sufficiently close to it. For each of the two edges $e_{v_0,{\rm l},{j_0}^{\prime \prime}}$ and $e_{v_0,{\rm l},{j_0}^{\prime \prime}+1}$, entering $v_0$, we add a new vertex $v_1$ and $v_2$, respectively and label them by a suitable same real number $i(v_1)=i(v_2)$ smaller than $m_{D_{\mathcal{S}},1}(v_0)$ and sufficiently close to this. We add another new vertex $v_3$ between $v_1$ and $v_0$ and label $v_3$ by the original value $m_{D_{\mathcal{S}},1}(v_0)$ if ${j_0}^{\prime \prime} \neq 1$. We add another new vertex $v_4$ between $v_2$ and $v_0$ and label $v_4$ by the original value $m_{D_{\mathcal{S}},1}(v_0)$ if ${j_0}^{\prime \prime} \neq a$. 

For the remaining edges $e_{v_0,{\rm l},j^{\prime \prime}}$ with $j^{\prime \prime} <{j_0}^{\prime \prime}$ and $e_{v_0,{\rm l},j^{\prime \prime}}$ with ${j}^{\prime \prime}>{j_0}^{\prime \prime}+1$ we change them into edges entering $v_3$ and $v_4$, respectively. 
We have the unique integer $j^{\prime \prime}={j_0}^{\prime \prime \prime}$ satisfying the relation $x_{j^{\prime},2,{\rm r},{j_0}^{\prime \prime \prime}-1}<x_{j^{\prime},2}<x_{j^{\prime},2,{\rm r},{j_0}^{\prime \prime \prime}}$.
For the remaining edges $e_{v_0,{\rm r},j^{\prime \prime}}$ with $j^{\prime \prime} <{j_0}^{\prime \prime \prime}$ and $e_{v_0,{\rm r},j^{\prime \prime}}$ with ${j}^{\prime \prime}>{j_0}^{\prime \prime \prime}$, we change them into edges entering $v_3$ and $v_4$, respectively. 


Through this, we can also check the following immediately.

\begin{Thm}
\label{thm:5}
We discuss Theorem \ref{thm:2} {\rm (}\ref{thm:2.2}{\rm )} with the condition on the vertical lines and the vertical segments being dropped. Let $p=(x_{j^{\prime},1}, x_{j^{\prime},2})$ a vertical pole of I-type. 
The V-digraph obtained above is isomorphic to the resulting Poincar\'e-Reeb V-digraph $W_{D_{\mathcal{S^{\prime}}},1}$. 
\end{Thm}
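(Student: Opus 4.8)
The plan is to treat Theorem \ref{thm:5} as a degenerate, ``global along one vertical fibre'' version of Theorem \ref{thm:2} (\ref{thm:2.2}) and to prove it by the same local-analysis-plus-bookkeeping method already used for Theorems \ref{thm:2} and \ref{thm:3}. First I would invoke the structural facts behind Proposition \ref{prop:1}, namely Ehresmann's theorem as recorded in Remark \ref{rem:1} and the theory of \cite{saeki1,saeki2}: away from the finitely many pre-vertices, both $q_{D_{\mathcal{S}},1}$ and $q_{D_{\mathcal{S}'},1}$ are projections of trivial $D^1$-bundles over intervals, so each V-digraph is completely determined by the behaviour of the vertical level sets $\{x_1=c\}\cap\overline{D_{\mathcal{S}}}$ for $c$ near $x_{j',1}$. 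Since $S_{x_{j'},r_{j'}}$ is centred at $p$ and $r_{j'}$ is sufficiently small, $\overline{D_{\mathcal{S}'}}=\overline{D_{\mathcal{S}}}\setminus\overline{B}$ differs from $\overline{D_{\mathcal{S}}}$ only inside the small ball $\overline{B}$; hence it suffices to compare the two graphs over the interval $(x_{j',1}-r_{j'},x_{j',1}+r_{j'})$ and to verify that everything outside this interval is unchanged.

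Second, I would make precise the combinatorial model of the unperturbed vertex $v_0=q_{D_{\mathcal{S}},1}(p)$. The connected component of $\{x_1=x_{j',1}\}\cap\overline{D_{\mathcal{S}}}$ through $p$ is the segment from $x_{j',2,\mathrm l}$ to $x_{j',2,\mathrm u}$, and the enumerated conditions defining the increasing sequences $\{x_{j',2,\mathrm l,j''}\}_{j''=1}^{a}$ and $\{x_{j',2,\mathrm r,j''}\}_{j''=1}^{b}$ state precisely that this single segment collapses to $v_0$ and that the edges entering and departing $v_0$ are in bijection with the subsegments cut out by the interlaced families of type-I and type-II vertical poles. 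I would derive these conditions directly from elementary plane geometry of the circles (Remark \ref{rem:2}) and the local normal form at a vertical pole, so that the distinguished indices ${j_0}''$ (the entering subsegment ending at the height of $p$) and ${j_0}'''$ (the departing subsegment containing that height) are well defined.

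Third, and this is the heart of the argument, I would analyse the surgery. Removing $\overline{B}$ modifies the level sets $\{x_1=c\}$ only for $c$ in the small interval $(x_{j',1}-r_{j'},x_{j',1}+r_{j'})$, and the \emph{I-type} hypothesis fixes the side on which the resulting notch opens, separating the two fibre subsegments meeting at the height of $p$. Translating this into the graph: the two flanking entering edges $e_{v_0,\mathrm l,{j_0}''}$ and $e_{v_0,\mathrm l,{j_0}''+1}$ acquire new vertices $v_1,v_2$ at a common value slightly below $m_{D_{\mathcal{S}},1}(v_0)$, the vertex $v_0$ is relabelled to a nearby larger value, and the auxiliary vertices $v_3,v_4$ (carrying the original value $m_{D_{\mathcal{S}},1}(v_0)$) absorb the remaining entering edges of index $<{j_0}''$ or $>{j_0}''+1$ and the departing edges separated by ${j_0}'''$. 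Matching orientations (values must increase along oriented edges) and the assigned labels against the explicit construction preceding the statement then yields the asserted isomorphism, with FIGURE \ref{fig:9} realising the simplest instance; the smallness of $r_{j'}$ is exactly what guarantees that all new labels can be placed in the required order and that no additional pre-vertex is created.

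I expect the main obstacle to be the bookkeeping at the high-degree vertex. Unlike Theorem \ref{thm:2} (\ref{thm:2.2}), where the extra hypothesis forces $v_0$ to have degree $1$ or $3$, here the numbers $a$ and $b$ are arbitrary, because several vertical poles sharing the $x_1$-coordinate $x_{j',1}$ lie on one fibre segment and hence collapse to the single vertex $v_0$. One must verify that excising a single small disk centred at the one type-I pole $p$ detaches \emph{precisely} the two fibre subsegments adjacent to the height of $p$ and reattaches the remaining edges through $v_3$ and $v_4$ with their original orientations and labels, and that this reconnection is forced rather than merely consistent. I would settle this using the transversality of the vertical fibre to ${\partial}_{\mathrm s}D_{\mathcal{S}}$ recorded in Remark \ref{rem:1}, together with the I-type hypothesis and the interlacing of the two pole sequences from the second step, which jointly pin down uniquely where the new boundary arc of $S_{x_{j'},r_{j'}}$ meets each adjacent subsegment.
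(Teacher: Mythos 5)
Your proposal is correct and follows essentially the same route as the paper: the paper's entire justification of Theorem \ref{thm:5} is the explicit construction preceding the statement (the interlaced sequences, the vertices $v_1,\dots,v_4$, the re-attachment of edges) followed by the remark that one ``can check the following immediately,'' and your plan verifies exactly that construction using the same tools the paper uses for Theorems \ref{thm:1}--\ref{thm:3}, namely the bundle structure from Remark \ref{rem:1}, the local plane geometry of Remark \ref{rem:2}, and the model case of FIGURE \ref{fig:9}. The only difference is that you make explicit the verification the paper omits --- in particular the point that the high degree of $v_0$ (arbitrary $a$ and $b$) is the genuinely new feature relative to Theorem \ref{thm:2} (\ref{thm:2.2}) and that the I-type hypothesis pins down which two entering subsegments are detached --- which is a faithful expansion rather than a different argument.
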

FIGURE \ref{fig:9} is for the case $(a,b,{j_0}^{\prime \prime},{j_0}^{\prime \prime \prime})=(3,1,1,1)$. We have the case of a vertical pole $p$ of II-type by considering Proposition \ref{prop:2} (\ref{prop:2}). By considering Proposition \ref{prop:2} (\ref{prop:2.3}), we also have the case for the Poincar\'e-Reeb V-digraph $W_{D_{\mathcal{S^{\prime}}},2}$. 

\begin{Prob}
Can we define and investigate fundamental properties of arrangements of circles (MBC arrangements) respecting Example \ref{ex:2}, and so on?
\end{Prob}
We have some positive explicit idea. We also need to check rigorously. If you readers have another positive idea, it is nice and it should be presented in some way.  

We can generalize this problem as follows. 
\begin{Prob}
Can we define and investigate fundamental properties of meaningful classes of arrangements of circles (MBC arrangements) further?
\end{Prob}

\begin{Prob}
Can we apply our present study and Problems to obtain explicit classes of explicit real algebraic functions, maps and manifolds. 
\end{Prob}
As presented in our introduction for example, we are interested in constructing explicit real algebraic functions and manifolds. For example, we are interested in certain classes generalizing the canonical projections of the unit spheres $S^{m}$, obtained through Example \ref{ex:1} with \cite{kitazawa7}. Our interest lies in the following problem, started by the author.
\begin{Prob}
From a graph, can we reconstruct a nice real algebraic function whose Reeb graph is a given graph?
\end{Prob}
 \cite{kitazawa5} is a pioneering study. This considers construction on real algebraic maps onto regions surrounded by real algebraic curves or hypersurfaces which are mutually disjoint. A closely related and independent study 	\cite{bodinpopescupampusorea} has helped us to present \cite{kitazawa5}. This study is on finding a region surrounded by a non-singular real algebraic curves and collapses to a nicely embedded graph in the plane: approximation is a key-ingredient.
 A Poincar\'e-Reeb graph of an region is first introduced in \cite{bodinpopescupampusorea, sorea1, sorea2} as a graph the region naturally collapses to. 
 Our study obtains a real algebraic function obtained as the composition of the presented real algebraic map with a projection and information of its Reeb graph. \cite{kitazawa6} is a related article in a proceeding (with a short review). \cite{kitazawa7} extends \cite{kitazawa5} where \cite{kitazawa5} considers the case the hypersurfaces are disjoint.

For related study, we should review reconstruction of nice differentiable (smooth) functions whose Reeb graphs are prescribed graphs and our real algebraic studies follow these studies. \cite{sharko} is a pioneering study and \cite{masumotosaeki, michalak} are important related studies. The author published related studies such as \cite{kitazawa4}. 
In these studies, local reconstruction or reconstruction of local functions onto neighborhoods of vertices of graphs are main ingredients. After that, we glue these local functions together. The constraint that functions are real algebraic makes the problems difficult and for example, it is very difficult to find nice classes of graphs in real algebraic cases.  
\section{Conflict of interest and Data availability.}
\noindent {\bf Conflict of interest.} \\
The author was a member of the project JSPS Grant Number JP17H06128.
The author was also a member of the project JSPS KAKENHI Grant Number JP22K18267. Principal Investigator for them is all Osamu Saeki.  The author works at Institute of Mathematics for Industry (https://www.jgmi.kyushu-u.ac.jp/en/about/young-mentors/) and this is closely related to our study. Our study thanks them for the supports. The author is also a researcher at Osaka Central
Advanced Mathematical Institute (OCAMI researcher), supported by MEXT Promotion of Distinctive Joint Research Center Program JPMXP0723833165. Although he is not employed there, our study also thanks this. \\
\ \\
{\bf Data availability.} \\
Data essentially supporting our present study are all in the
 paper.

\end{document}